\numberwithin{equation}{section}
\newtheorem{defn}{Definition}[section]
\newtheorem{thm}{Theorem}[section]
\newtheorem{prop}{Proposition}[section]
\newtheorem{rem}{Remark}[section]
\newtheorem{cor}{Corollary}[section]
\newenvironment{proof}[1][Proof]{\begin{trivlist}
\item[\hskip \labelsep {\bfseries #1}]}{\end{trivlist}}
\begin{document}

\begin{center}

 \section*{Lidstone Fractal Interpolation and Error Analysis}
\end{center}

\begin{center}  G. P. Kapoor \\* Department of Mathematics and Statistics \\* Indian Institute of Technology Kanpur, Kanpur 208016, India \\* email: gp@iitk.ac.in \end{center}

\begin{center} and \end{center}

\begin{center} M. Sahoo \\* School of Applied Sciences \\* KIIT University, Bhubaneswar 751024, Odisha, India \\* email: msahoo@gmail.com \end{center}

\doublespacing

\begin{abstract}
\noindent In the present paper, the notion of Lidstone Fractal Interpolation Function ($Lidstone \ FIF$) is introduced to interpolate and approximate data generating functions that arise from real life objects and outcomes of several scientific experiments. A Lidstone FIF extends the classical Lidstone Interpolation Function which is generally found not to be satisfactory in interpolation and approximation of such functions. For a data $\{(x_n,y_{n,2k}); n=0,1,\ldots,N \ \text{and} \ k=0,1,\ldots,p\}$  with $N,p\in\mathbb{N}$, the existence of Lidstone FIF is proved in the present work and a computational method for its construction is developed.  The constructed Lidstone FIF is a $C^{2p}[x_0,x_N]$ fractal function $\ell_\alpha$ satisfying $\ell_\alpha^{(2k)}(x_n)=y_{n,2k}$, $n=0,1,\ldots,N$,\ $k=0,1,\ldots,p$. Our error estimates establish that the order of $L^\infty$-error in approximation of a data generating function in $C^{2p}[x_0,x_N]$  by Lidstone FIF is of the order $N^{-2p}$, while $L^\infty$-error in approximation of $2k$-order derivative of the data generating function by corresponding order derivative of Lidstone FIF is of the order $N^{-(2p-2k)}$. The results found in the present work are illustrated for computational constructions of a Lidstone FIF and its derivatives with an example of a data generating function.
\end{abstract}

{\bf Key Words :} Lidstone, Interpolation, Approximation, Fractal, FIF,  IFS, Error Estimate

\newpage

\section{Introduction}
The Lidstone Polynomials \cite{l2} were introduced to offer an approximation of sufficient number of times continuously differentiable functions at two points instead of their one point approximation given by Taylor Polynomials. A classical Lidstone Interpolation Function on the interval $[x_0,x_N]$ is comprised of a Lidstone Polynomial for each subinterval arising from a partition of the interval. It approximates a function in $C^{2p}[x_0,x_N]$, the class of $2p$ times  continuously differentiable functions, such that the Lidstone Interpolation Function and all its even order derivatives up to the order $2p$  agree with the given function and its corresponding derivatives at finitely many abscissas of data points in the interval including its end points. Such an interpolation is widely used in the boundary value problem consisting of the $2p^{th}$-order ordinary differential equation $y^{(2p)}= - f(x,y,y', \ldots,y^{(2p-1)})$ with Lidstone Boundary Conditions $y^{(2i)}(x_0)
=\alpha_i$, $y^{(2i)}(x_N)=\beta_i, \  i=0,1,...,p$ \cite{a6,a3,b20,t2}. However the classical Lidstone Interpolation is generally not suited for approximation of fractal functions \cite{b3}. Such functions quite often arise in various science and engineering problems \cite{a10,c6,i2,k2}, medicine\cite{i1}, economics \cite{m8,w5}, arts\cite{b17}, and music\cite{m1}. In the present paper we introduce the notion of Lidstone Fractal Interpolation to approximate such fractal functions with respect to $L^\infty$-norm.

The organization of the paper is as follows. In  section
\ref{basica}, we review some basic definitions and results on
classical  Lidstone Interpolation that are used  in the later sections of the paper. In Section \ref{consLFIF}, for an interpolation data in the Euclidean plane $\mathbb{R}^2$, the notion of Lidstone FIF is introduced, its existence is established and a computational method of construction of Lidstone FIF is developed. The convergence of Lidstone FIF and its even order derivatives to the data generating function and its corresponding derivatives are studied in Section \ref{convLFIF}. To this end, first the continuous dependence of Lidstone FIF $\ell_\alpha$ and its even order derivatives on the parameter  $\alpha$ is established. This is followed by proving that, for the classical Lidstone Interpolation Function $\phi =$ $\phi^{(0)}$ and its even order derivatives $\phi^{(2k)}$  and Lidstone FIF $\ell_\alpha =$ $\ell_\alpha^{(0)}$ and its corresponding derivatives $\ell_\alpha^{(2k)}$, the $L^\infty$-norm $ \|\phi^{(2k)}-\ell_\alpha^{(2k)}\|_\infty \rightarrow 0$, $k=0,1,2,\ldots,p$, as norm of the  partition of the interval $[x_0,x_N]$ consisting abscissas of data points tends to zero. Using these results, it is found in this section that $L^\infty$-error in approximation of data generating function in $C^{2p}[x_0,x_N]$  by Lidstone FIF is of the order $N^{-2p}$, while $L^\infty$-error in approximation of $2k$-order derivative of data generating function in $C^{2p}[x_0,x_N]$  by corresponding order derivative of Lidstone FIF is of the order $N^{-(2p-2k)}$. Finally, the results of Section \ref{consLFIF} on computational constructions of a Lidstone FIF and its derivatives are illustrated with an example in Section \ref{examplesc} .

\section{Classical Lidstone and Piecewise-Lidstone \\* Interpolation: Basic Concepts} \label{basica}

\label{lidstone}
It is known \cite{w6} that a function $g: \mathbb{R}
\rightarrow \mathbb{R}$ possessing a sufficient number of derivatives can be expanded as Lidstone Series \begin{equation*}
g(x)=\sum_{l=0}^p\left[g^{(2l)}(0)\Lambda_l(1-x)+g^{(2l)}(1)\Lambda_l(x)\right]+R_{p+1}(g,x),
\end{equation*} where, $\mathbb{R}$ is the real line, $\Lambda_{l}(x)$ are the \textit{Lidstone Polynomials }defined by means of recursive relations:
\begin{equation}\label{eqnc1}\left.
\begin{array}{l}
\Lambda_{0}(x)=x\\
\Lambda''_{l}(x)=\Lambda_{l-1}(x), ~~l\geq 1\\
\Lambda_{l}(0)=\Lambda_{l}(1)= 0, ~~l\geq 1
\end{array}\right\}
\end{equation}
and $R_{p+1}(g,x)=\int_0^1G_{p+1}(x,t)g^{(2p+2)}(t)~dt$, where
\begin{equation*}\label{eqnc3}
G_1(x,t)=\left\{ \begin{array}{cc}(x-1)t,& t\leq x\\ (t-1)x, &
x\leq t \end{array} \right.
\end{equation*}

\begin{equation*}\label{eqnc4}
G_p(x,t)=\int_0^1g_1(x,s)G_{p-1}(s,t)~ds,~~~~p\geq 2.
\end{equation*}

\noindent The following proposition gives an explicit form
of Lidstone Polynomial and its bound in the interval $[0,1]$: \newpage

\begin{prop}\cite{a2}\label{propc2}
The Lidstone Polynomial $\Lambda_{l}(x)$, in the interval $[0,1]$,
can be expressed as \begin{equation*}\label{eqnc5}
\Lambda_{l}(x)=(-1)^{l}\frac{2}{\pi^{2l+1}}\sum_{n=1}^{\infty}\frac{(-1)^{n+1}}{n^{2l+1}}\sin
n\pi x,\ \ \ \ l\geq 1
\end{equation*}
so that,
\begin{equation}\label{eqnc6}
|\Lambda_{l}(x)|\leq \frac{1}{3\pi^{2l-1}};\ \ \ \ 0\leq x \leq 1.
\end{equation}
\end{prop}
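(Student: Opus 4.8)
The plan is to expand $\Lambda_l$ in the Fourier sine system $\{\sin n\pi x\}_{n\geq 1}$ on $[0,1]$. This is the natural basis here because each $\sin n\pi x$ is an eigenfunction of $d^2/dx^2$ with eigenvalue $-n^2\pi^2$ that vanishes at both endpoints, which is exactly the structure of the defining relations (\ref{eqnc1}). For $l\geq 1$ the function $\Lambda_l$ is a polynomial satisfying $\Lambda_l(0)=\Lambda_l(1)=0$, so its odd $2$-periodic extension is continuous and piecewise smooth and its sine series converges uniformly on $[0,1]$. Thus I may write $\Lambda_l(x)=\sum_{n=1}^{\infty}b_{l,n}\sin n\pi x$ with $b_{l,n}=2\int_0^1\Lambda_l(x)\sin n\pi x\,dx$, and the whole problem reduces to evaluating these coefficients.

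To obtain a recurrence for $b_{l,n}$ I would integrate by parts twice. Since $\Lambda_l(0)=\Lambda_l(1)=0$ and $\sin n\pi x$ vanishes at $x=0,1$, all boundary terms drop and, using $\Lambda_l''=\Lambda_{l-1}$ from (\ref{eqnc1}),
\[
b_{l,n}=-\frac{2}{n^2\pi^2}\int_0^1\Lambda_l''(x)\sin n\pi x\,dx=-\frac{2}{n^2\pi^2}\int_0^1\Lambda_{l-1}(x)\sin n\pi x\,dx .
\]
For $l\geq 2$ one also has $\Lambda_{l-1}(0)=\Lambda_{l-1}(1)=0$, so the last integral equals $\tfrac12 b_{l-1,n}$ and hence $b_{l,n}=-\frac{1}{n^2\pi^2}b_{l-1,n}$. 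The base case $l=1$ must be handled separately, because $\Lambda_0(x)=x$ does not vanish at $x=1$; here one computes $\int_0^1 x\sin n\pi x\,dx=\frac{(-1)^{n+1}}{n\pi}$ directly, giving $b_{1,n}=\frac{2(-1)^n}{n^3\pi^3}$. Solving the recurrence yields $b_{l,n}=\bigl(-\tfrac{1}{n^2\pi^2}\bigr)^{l-1}b_{1,n}=(-1)^l\frac{2}{\pi^{2l+1}}\frac{(-1)^{n+1}}{n^{2l+1}}$ after a short parity check on the signs, which is exactly the claimed series.

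For the bound (\ref{eqnc6}) I would estimate the series termwise with $|\sin n\pi x|\leq 1$, so that $|\Lambda_l(x)|\leq \frac{2}{\pi^{2l+1}}\sum_{n=1}^{\infty}n^{-(2l+1)}$. The remaining sum is controlled by the integral test, $\sum_{n=1}^{\infty}n^{-(2l+1)}\leq 1+\int_1^{\infty}t^{-(2l+1)}\,dt=1+\frac{1}{2l}\leq \frac{3}{2}$ for $l\geq 1$, whence $|\Lambda_l(x)|\leq \frac{3}{\pi^{2l+1}}\leq \frac{1}{3\pi^{2l-1}}$, the final inequality being just $3\leq \pi^2/3$.

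I expect the only genuine obstacle to be the bookkeeping at $l=1$: the Dirichlet structure used to discard the boundary terms fails for $\Lambda_0$, so that single base integral has to be done by hand, after which everything is a geometric-type recurrence and sign tracking. The one analytic point to confirm is that the termwise integration defining $b_{l,n}$ (and the termwise estimate for the bound) is legitimate, but this is immediate from the uniform convergence furnished by the dominating series $\sum n^{-(2l+1)}$ for $l\geq 1$.
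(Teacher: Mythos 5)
Your proof is correct, but note that the paper itself contains no proof of this proposition: it is stated as a quoted result from Agarwal and Wong \cite{a2}, so there is no internal argument to compare against. What you give is the standard derivation, and every step checks out: the two integrations by parts are legitimate (the boundary terms vanish because $\Lambda_l(0)=\Lambda_l(1)=0$ kills the cosine terms and $\sin n\pi x$ vanishes at $0,1$ for the sine terms), the base case $b_{1,n}=\frac{2(-1)^n}{n^3\pi^3}$ from $\int_0^1 x\sin n\pi x\,dx=\frac{(-1)^{n+1}}{n\pi}$ is right, and the recurrence $b_{l,n}=-\frac{1}{n^2\pi^2}\,b_{l-1,n}$ reproduces the claimed series, as one can sanity-check at $l=1$, $x=\tfrac12$ using $\sum_{k\geq 0}\frac{(-1)^k}{(2k+1)^3}=\frac{\pi^3}{32}$ against $\Lambda_1(x)=\frac{x^3-x}{6}$. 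Two small remarks. First, your parenthetical that the identity $\int_0^1\Lambda_{l-1}\sin n\pi x\,dx=\tfrac12 b_{l-1,n}$ requires $\Lambda_{l-1}(0)=\Lambda_{l-1}(1)=0$ is not quite the right bookkeeping: that identity is definitional (it is just the coefficient formula), and the boundary conditions matter only for the convergence of the sine series to $\Lambda_{l-1}$, which you handle separately; the genuine reason $l=1$ is special is simply that $\Lambda_0=x$ is not expanded, exactly as you say. Second, the specific constant $\frac{1}{3\pi^{2l-1}}$ in (\ref{eqnc6}) is most transparently obtained from $\sum_{n\geq 1}n^{-(2l+1)}=\zeta(2l+1)\leq\zeta(2)=\frac{\pi^2}{6}$, since $\frac{2}{\pi^{2l+1}}\cdot\frac{\pi^2}{6}=\frac{1}{3\pi^{2l-1}}$; your integral-test bound $\frac{3}{2}$ yields the marginally sharper $\frac{3}{\pi^{2l+1}}$ and then recovers the stated bound via $9\leq\pi^2$, which is equally valid.
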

\noindent The truncated Lidstone series
\begin{equation*}
\sum_{l=0}^p\left[\alpha_{2l}\Lambda_l(1-x)+\beta_{2l}\Lambda_l(x)\right]
\end{equation*}
for $\alpha_{2l},\beta_{2l}\in \mathbb{R}$, $l=0,1,\ldots,p$, is
known as  \textit{Lidstone Interpolating Polynomial} in the
interval $[0,1]$. To extend Lidstone Interpolating Polynomial to
the interval $[x_0,x_N]$, we have

\begin{defn}\label{lip}
A \ real \ polynomial \ $q(x)$ of degree $2p+1$, satisfying  the
Lidstone Conditions \ $q^{(2l)}(x_0)=y_{0,2l}$,
$q^{(2l)}(x_N)=y_{N,2l}$, $0\leq l\leq p$, \ $y_{0,2l}, \
y_{N,2l} \in \mathbb{R}$, is known as a \textit{Lidstone
Interpolating Polynomial} in $[x_0,x_N]$.
\end{defn}
For given $y_{0,2l}, \ y_{N,2l}$, $l=0,1,\ldots,p$, a representation
of the Lidstone Interpolating Polynomial $q(x)$ is given by the
following theorem:
\begin{thm}\cite{a2}\label{thmc1}
The Lidstone Interpolating Polynomial $q(x)$ can be expressed as
\begin{equation*}\label{eqnc8}
q(x)=\sum_{l=0}^p\left[y_{0,2l}\Lambda_{l}\left(\frac{x_N-x}{x_N-x_0}\right)
+y_{N,2l}\Lambda_{l}\left(\frac{x-x_0}{x_N-x_0}\right)\right](x_N-x_0)^{2l}.
\end{equation*}
\end{thm}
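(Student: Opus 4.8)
The plan is to verify directly that the right-hand side of the asserted identity, call it $\tilde q(x)$, is a polynomial of degree at most $2p+1$ satisfying the Lidstone Conditions of Definition \ref{lip}, and then to deduce $q=\tilde q$ from uniqueness. Since $\Lambda_0(x)=x$ and the recursion $\Lambda_l''=\Lambda_{l-1}$ in (\ref{eqnc1}) raises the degree by two at each step, each $\Lambda_l$ is a polynomial of degree $2l+1$; hence every summand has degree at most $2p+1$ in $x$ and $\tilde q$ lies in the $(2p+2)$-dimensional space of polynomials of degree at most $2p+1$, as required. To deal cleanly with the rescaled arguments, I would introduce the affine substitution $t=\frac{x-x_0}{x_N-x_0}$, so that $x=x_0$ corresponds to $t=0$, $x=x_N$ to $t=1$, and $\frac{x_N-x}{x_N-x_0}=1-t$. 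Writing $Q(t)=\tilde q\bigl(x_0+t(x_N-x_0)\bigr)$, the chain rule gives $\tilde q^{(2l)}(x)=(x_N-x_0)^{-2l}\,Q^{(2l)}(t)$, which will absorb the factors $(x_N-x_0)^{2l}$ when I evaluate derivatives at the endpoints.

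The key step is a derivative formula extracted from the defining recursion. Two differentiations of $\Lambda_l$ produce $\Lambda_{l-1}$, and since each differentiation of $\Lambda_l(1-t)$ in $t$ contributes one factor $-1$, an even number of differentiations leaves the sign unchanged. An induction on $j$ then yields, for $0\le j\le l$,
\[
\frac{d^{2j}}{dt^{2j}}\Lambda_l(t)=\Lambda_{l-j}(t),\qquad \frac{d^{2j}}{dt^{2j}}\Lambda_l(1-t)=\Lambda_{l-j}(1-t).
\]
Applying this termwise to $Q$ gives
\[
Q^{(2j)}(t)=\sum_{l=j}^{p}\Bigl[y_{0,2l}\,\Lambda_{l-j}(1-t)+y_{N,2l}\,\Lambda_{l-j}(t)\Bigr](x_N-x_0)^{2l},
\]
where the lower summation index rises to $j$ because the terms with $l<j$ have degree $2l+1<2j$ and hence vanish under $2j$ differentiations.

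It then remains to evaluate at the endpoints using the boundary data $\Lambda_0(0)=0$, $\Lambda_0(1)=1$ (from $\Lambda_0(x)=x$) and $\Lambda_m(0)=\Lambda_m(1)=0$ for $m\ge1$. At $t=0$ every term with $l>j$ drops out since $\Lambda_{l-j}(1)=\Lambda_{l-j}(0)=0$ when $l-j\ge1$, while the $l=j$ term survives through $\Lambda_0(1)=1$, giving $Q^{(2j)}(0)=y_{0,2j}(x_N-x_0)^{2j}$, whence $\tilde q^{(2j)}(x_0)=y_{0,2j}$; the computation at $t=1$ is symmetric and yields $\tilde q^{(2j)}(x_N)=y_{N,2j}$. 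Thus $\tilde q$ meets all $2p+2$ Lidstone Conditions. Since this holds for arbitrary data, the linear evaluation map sending a polynomial of degree at most $2p+1$ to its $p+1$ even-order derivatives at each endpoint is surjective, hence—by equality of dimensions—bijective; in particular the Lidstone Interpolating Polynomial is unique and coincides with $\tilde q$, establishing the representation. I expect the main obstacle to be stating and proving the derivative formula with correct sign bookkeeping for the $1-t$ argument and the correct shift of the summation index; once that formula is in hand, the endpoint evaluations and the dimension-count uniqueness argument are routine.
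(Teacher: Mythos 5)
Your proposal is correct and complete, but there is nothing in the paper to compare it against: Theorem \ref{thmc1} is stated without proof and simply imported from Agarwal and Wong \cite{a2}, so your blind attempt supplies an argument the paper omits. The route you take is the standard one and every step checks out: the recursion (\ref{eqnc1}) does give $\deg \Lambda_l = 2l+1$ by induction (two integrations raise the degree by two and the boundary conditions only fix the affine part), so the right-hand side lies in the $(2p+2)$-dimensional space of polynomials of degree at most $2p+1$; the even-derivative formula $\frac{d^{2j}}{dt^{2j}}\Lambda_l(t)=\Lambda_{l-j}(t)$ and its counterpart for the argument $1-t$ are exactly right, with the sign bookkeeping handled by $(-1)^{2j}=1$ and the index shift justified since terms with $l<j$ have degree $2l+1\le 2j-1$ and are annihilated; the endpoint evaluations via $\Lambda_0(0)=0$, $\Lambda_0(1)=1$, $\Lambda_m(0)=\Lambda_m(1)=0$ for $m\ge 1$ correctly isolate the $l=j$ term, and the factor $(x_N-x_0)^{2j}$ from the chain rule cancels as you intend; and uniqueness by surjectivity-plus-dimension-count of the linear evaluation map $P\mapsto\bigl(P^{(2j)}(x_0),P^{(2j)}(x_N)\bigr)_{j=0}^{p}$ is airtight. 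One cosmetic remark: Definition \ref{lip} says ``degree $2p+1$'' while your argument (rightly) works in the space of polynomials of degree \emph{at most} $2p+1$; this is the reading the paper itself adopts later (e.g., in the construction of the $q_n$ in Theorem \ref{thmc4}), so no change is needed.
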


It is apparent that Lidstone Interpolating Polynomial
interpolates a data given only at two points. To interpolate a
data given at more than two points, Lidstone Interpolation Functions defined below are employed.
Let \~{S}$^{p}[x_0,x_N]$, where $-\infty<x_0<x_N<\infty$ and $p$ is a positive integer, be the set of all real-valued functions $g(x)$ that satisfy the following conditions:\\
(i) The function $g$ is $p$ times continuously differentiable on $[x_0,x_N]$.\\
(ii) There exists a partition  $\Delta:x_0<x_1<\cdots < x_N$ such that on each open subinterval $(x_{n-1},x_n)$, the $p$\textsuperscript{th}-derivative $g^{(p)}$ is continuously differentiable.\\

(iii) The sup-norm of $g^{(p+1)}$ is finite, i.e.,
\begin{equation*}\label{eqnc9}
\|g^{(p+1)}\|_\infty=\max_{1\leq i \leq N} \sup_{x\in
(x_{n-1},x_n)}|g^{(p)}(x)| < \infty.
\end{equation*}
For a fixed uniform partition $\Delta:x_0<x_1<\cdots < x_N$ of the
interval $[x_0,x_N]$, define
\begin{eqnarray*}\label{eqnb8a}
L^\Delta_{p+1}&=&\left\{\varphi~|~\varphi\in C[x_0,x_N] \text{ and
}
\phi \text{ is a polynomial } \text{ of degree}\right.\nonumber \\
& &~~~~~~~~\left.\text{at most} \text{ 2p+1 in each sub
interval }[x_{n-1},x_n]\right\}
\end{eqnarray*}
where, $C[x_0,x_N]$ is the set of all continuous functions in $[x_0,x_N]$. It is easily seen that $L^\Delta_{p+1}$ is a linear space with usual point-wise addition and scalar multiplication of functions.
\begin{defn}
For a given function $g\in C^{2p}[x_0,x_N]$,  \ $L_{p+1}^\Delta g(x)$
is called the $L_{p+1}^\Delta$-interpolate (also called {\it
Lidstone Interpolation Function})\quad of \ $g(x)$, if \quad $L_{p+1}^\Delta g\in L_{p+1}^\Delta$ and $\frac{d^{2k}}{dx^{2k}}L_{p+1}^\Delta
g(x_n)=g^{(2k)}(x_n)$; $0\leq k\leq p$, $0\leq n\leq N$.
\end{defn} \noindent
In view of Theorem \ref{thmc1}, $L_{p+1}^\Delta$-interpolate for a
function $g\in C^{2p}[x_0,x_N]$ uniquely exists and, for $x \in
[x_{n-1},x_n]$,
\begin{equation*}\label{eqnc10}
L_{p+1}^\Delta g(x)=
\sum_{l=0}^p\left[g^{(2l)}(x_{n-1})\Lambda_{l}\left(\frac{x_n-x}{x_n-x_{n-1}}\right)+g^{(2l)}(x_n)\Lambda_{l}
\left(\frac{x-x_{n-1}}{x_n-x_{n-1}}\right)\right](x_n-x_{n-1})^{2l}.
\end{equation*}
Thus,
\begin{equation*}\label{eqnc11}
L_{p+1}^\Delta g(x)=\sum_{n=0}^N
\sum_{l=0}^pr_{p,n,l}(x)~g^{(2l)}(x_n)
\end{equation*}
where,
\begin{equation*}\label{eqnc12}
r_{p,n,l}(x)= \left\{\begin{array}{ccc}
           \Lambda_l\left(\frac{x-x_{n-1}}{x_n-x_{n-1}}\right)(x_n-x_{n-1})^{2l}, & x_{n-1}\leq x\leq x_n, & 1\leq n\leq N\\
           \Lambda_l\left(\frac{x_{n+1}-x}{x_{n+1}-x_n}\right)(x_{n+1}-x_n)^{2l}, & x_n\leq x\leq x_{n+1}, & 0\leq n\leq N-1\\
           0,& \text{otherwise}.& \\
       \end{array}
\right.
\end{equation*}
It is easily verified that the set of functions $\{r_{p,n,l};n=0,1,\ldots N, l=0,1,\ldots,p\}$ forms a basis of the linear space $L_{p+1}^\Delta$. \newpage

\noindent The error in approximation of a function belonging to \~{S}$^{2p}[x_0,x_N]$ and its even order derivatives by an $L^\Delta _{p+1}$-interpolate and its corresponding derivatives is given by the following theorem:
\begin{thm}\cite{a2}\label{thmc3}
Let $g \in$  \~{S} $^{2p}[x_0,x_N]$. Then
\begin{equation}\label{eqnc16}
\left\|\frac{d^k}{dx^k}(g-L^\Delta _{p+1}g)\right\|_\infty \leq
2d_{2p,k} \|g^{(2p)}\|_\infty \|\Delta\|^{2p-k},~~ 0\leq k \leq 2p
\end{equation}
where, $\|\Delta\|=\max\{~x_n-x_{n-1}~; ~~1\leq n\leq N\}$ and
\begin{equation}\label{eqnc15}
d_{2p,k}= \left\{\begin{array}{lll}
             \frac{(-1)^{p-i}E_{2p-2i}}{2^{2p-2i}(2p-2i)!}, &k=2i, & 0\leq i\leq p\\
             (-1)^{p-i+1}\frac{2(2^{2p-2i}-1)}{(2p-2i)!}B_{2p-2i}, & k=2i+1, & 0\leq i\leq p-1\\
             2,& k=2p+1 & \\
       \end{array}
\right.
\end{equation}
$E_{2p}$ and $B_{2p}$ being $2p^{th}$ {\it Euler} and
{\it Bernoulli numbers} respectively. \end{thm}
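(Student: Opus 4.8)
The plan is to localise the estimate to one subinterval, rescale to $[0,1]$, represent the interpolation error through the iterated Lidstone Green's functions $G_j$ of the excerpt, and then bound its derivatives by the $L^1$-norms of explicit kernels acting on $g^{(2p)}$. First I would fix a subinterval $[x_{n-1},x_n]$ of length $h_n=x_n-x_{n-1}\le\|\Delta\|$ and pass to $s=(x-x_{n-1})/h_n\in[0,1]$, writing $G(s)=g(x_{n-1}+h_ns)$. By Theorem \ref{thmc1}, on this subinterval $L^\Delta_{p+1}g$ is exactly the Lidstone interpolating polynomial of $G$, so the scaled error $E=G-(\text{Lidstone polynomial of }G)$ satisfies the $2p+2$ Lidstone conditions $E^{(2i)}(0)=E^{(2i)}(1)=0$, $0\le i\le p$. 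Since $\frac{d^k}{dx^k}=h_n^{-k}\frac{d^k}{ds^k}$ and $G^{(2p)}(s)=h_n^{2p}g^{(2p)}(x_{n-1}+h_ns)$, a bound $\|E^{(k)}\|_{\infty,[0,1]}\le 2d_{2p,k}\|G^{(2p)}\|_\infty$ would give $\|\frac{d^k}{dx^k}(g-L^\Delta_{p+1}g)\|_{\infty,[x_{n-1},x_n]}\le 2d_{2p,k}\,h_n^{2p-k}\|g^{(2p)}\|_\infty$; because $2p-k\ge0$, one may replace $h_n^{2p-k}$ by $\|\Delta\|^{2p-k}$ and maximise over $1\le n\le N$ to reach \eqref{eqnc16}.

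Next I would represent $E$ via the Green's functions. Setting $w=E^{(2p)}$, the vanishing of $E^{(2i)}$ at the endpoints for $0\le i\le p$ means $E$ is the unique solution of $u^{(2p)}=w$ under the Dirichlet-type Lidstone conditions, whence
\begin{equation*}
E^{(2p-2j)}(s)=\int_0^1 G_j(s,t)\,w(t)\,dt,\qquad 1\le j\le p ,
\end{equation*}
with $E=\int_0^1 G_p(s,t)w(t)\,dt$ in particular, the iteration being guaranteed by the recursive definition of $G_p$. The decisive point is that this order-$2p$ representation needs only $w=G^{(2p)}\in C[0,1]$, which holds because $g\in\tilde S^{2p}[x_0,x_N]$; no control of $g^{(2p+1)}$ or $g^{(2p+2)}$ is required. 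Moreover $(L^\Delta_{p+1}g)^{(2p)}$ is, on the subinterval, the linear interpolant of $G^{(2p)}$ at the endpoints, so $w=G^{(2p)}-\operatorname{lin}(G^{(2p)})$ and $\|w\|_\infty\le 2\|G^{(2p)}\|_\infty$, which supplies the uniform leading factor $2$.

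The even-order estimates then follow by the bound
\begin{equation*}
\|E^{(2p-2j)}\|_\infty\le\|w\|_\infty\,\sup_{s\in[0,1]}\int_0^1|G_j(s,t)|\,dt,
\end{equation*}
and the odd-order ones by differentiating once under the integral, $E^{(2p-2j+1)}(s)=\int_0^1\partial_sG_j(s,t)\,w(t)\,dt$, bounded by $\sup_s\int_0^1|\partial_sG_j(s,t)|\,dt$; the top order $k=2p$ is immediate from $\|E^{(2p)}\|_\infty=\|w\|_\infty\le2\|G^{(2p)}\|_\infty$, matching $d_{2p,2p}=E_0=1$. The main obstacle, and the only genuinely delicate step, is to evaluate these kernel $L^1$-norms exactly and identify them with the entries of $d_{2p,k}$ in \eqref{eqnc15}: concretely, to prove
\begin{equation*}
\sup_{s}\int_0^1|G_m(s,t)|\,dt=\frac{|E_{2m}|}{2^{2m}(2m)!},\qquad \sup_{s}\int_0^1|\partial_sG_m(s,t)|\,dt=\frac{2(2^{2m}-1)|B_{2m}|}{(2m)!},
\end{equation*}
with $m=p-i$. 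Since $G_m$ does not change sign in $t$ for fixed $s$ (a standard property of the iterated Lidstone Green's function), the absolute value may be dropped and $\int_0^1 G_m(s,t)\,dt$ is the explicit polynomial solving $v^{(2m)}=1$ under Lidstone Dirichlet data, whose extremum is attained at $s=\tfrac12$ (respectively, for $\partial_sG_m$, at the endpoints) by symmetry. Evaluating that extremum through the spectral series of Proposition \ref{propc2} reduces each norm to an alternating zeta value $\sum_{n\ge1}(-1)^{n+1}n^{-2m}=(1-2^{1-2m})\zeta(2m)$; the classical expression of $\zeta(2m)$ via Bernoulli numbers and of the $\operatorname{sech}$ expansion via Euler numbers then produces exactly the constants above. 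The sign bookkeeping $(-1)^{p-i}E_{2p-2i}=|E_{2p-2i}|$ and $(-1)^{p-i+1}B_{2p-2i}=|B_{2p-2i}|$ confirms positivity of the $d_{2p,k}$ and completes the argument. I have checked the base case $m=1$ directly: $\sup_s\int_0^1|G_1|\,dt=\tfrac18=\frac{|E_2|}{2^2\,2!}$ and $\sup_s\int_0^1|\partial_sG_1|\,dt=\tfrac12=\frac{2(2^2-1)|B_2|}{2!}$, which matches both identities and lends confidence to the general evaluation.
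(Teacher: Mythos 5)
You should first note a mismatch of expectations: the paper contains no proof of this theorem at all — it is imported verbatim from Agarwal and Wong \cite{a2} as a known result. So there is no in-paper argument to compare against; what you have written is, in outline, a reconstruction of the standard Agarwal--Wong proof, and its architecture is sound. The localisation and rescaling step is correct (uniqueness of the Lidstone interpolating polynomial makes the restricted interpolant the Lidstone polynomial of $G$, and the $h_n^{2p-k}$ bookkeeping is right); the observation that $w=E^{(2p)}$ is $G^{(2p)}$ minus its linear endpoint interpolant, giving $\|w\|_\infty\le 2\|G^{(2p)}\|_\infty$ and hence the uniform factor $2$ in \eqref{eqnc16}, is exactly the right device, and it correctly explains why only $g\in\tilde{S}^{2p}$ (indeed only continuity of $g^{(2p)}$) is needed; the peeling $E^{(2p-2j)}(s)=\int_0^1G_j(s,t)w(t)\,dt$ is valid since the conditions $E^{(2i)}(0)=E^{(2i)}(1)=0$, $0\le i\le p-1$, determine the solution of $u^{(2p)}=w$ uniquely. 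For the even case your plan closes cleanly: $G_m$ has constant sign $(-1)^m$, and with $v_m(s)=\int_0^1G_m(s,t)\,dt$ one has $v_m''=v_{m-1}$, so $(-1)^mv_m$ is nonnegative, concave and symmetric, whence the maximum is at $s=\tfrac12$, where the sine expansion gives $\frac{|E_{2m}|}{2^{2m}(2m)!}$.

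Two concrete soft spots remain. First, the series you quote is wrong in both cases: the even-$k$ constant comes from the Dirichlet beta value $\sum_{j\ge0}(-1)^j(2j+1)^{-(2m+1)}=\frac{(-1)^mE_{2m}\pi^{2m+1}}{4^{m+1}(2m)!}$, and the odd-$k$ constant from the \emph{non-alternating} odd-index sum $\sum_{n\ \mathrm{odd}}n^{-2m}=(1-2^{-2m})\zeta(2m)$; the alternating value $(1-2^{1-2m})\zeta(2m)$ that you wrote would produce $2^{2m}-2$ in place of $2^{2m}-1$ and contradicts your own $m=1$ check, which matches the correct $\tfrac12$. Second, the claim that $\sup_s\int_0^1|\partial_sG_m(s,t)|\,dt$ is attained at the endpoints cannot be dismissed "by symmetry'': for interior $s$ the kernel $\partial_sG_m(s,\cdot)$ changes sign in $t$ (already for $m=1$ it flips at $t=s$), so the absolute value cannot be dropped there, and one must actually prove that the endpoint value — where $\partial_sG_m(0,\cdot)$ is one-signed and computable — dominates, e.g.\ via the piecewise analysis of Agarwal--Wong or a convexity argument for $s\mapsto\int_0^1|\partial_sG_m(s,t)|\,dt$ (true for $m=1$, where it equals $\tfrac{s^2}{2}+\tfrac{(1-s)^2}{2}$, but requiring proof in general). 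With the series identifications corrected and that extremal step supplied, your proposal yields exactly the constants \eqref{eqnc15} and the bound \eqref{eqnc16}.
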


\section{Construction of Lidstone FIF } \label{consLFIF}

In this section, we introduce Lidstone Fractal Interpolation Function, prove the existence and give a computational method for its construction. We begin with the definition of Lidstone FIF for a
given data:

Let $x_0<x_1<\cdots<x_N$ be a partition of the interval
$[x_0,x_N]$ and $\Theta_{2p} =
\{\alpha~|~\alpha=(\alpha_1,\alpha_2,\ldots,\alpha_N)\in\mathbb{R}^N,
|\alpha_n|<a_n^{2p}\}$, where $a_n=\frac{x_n-x_{n-1}}{x_N-x_0}$.
For $\alpha \in \Theta_{2p}$, consider the Iterated Function System  (\textit {IFS}) $\{\mathbb{R}^2; w_n(x,y)=(L_n(x), F_n(x,y)), n=1,2,\ldots,N\}$, where
$L_n:\mathbb{R} \rightarrow \mathbb{R}$ and $F_n:\mathbb{R}^2
\rightarrow \mathbb{R}$, for $n=1,2,\ldots,N$, are given by,
\begin{equation}\label{eqnc17}\left.
\begin{array}{ccc}
L_n(x)&=&a_nx+b_n\\
F_n(x,y)&=&\alpha_ny+q_n(x)
\end{array}\right\}
\end{equation}
with $b_n=\frac{x_Nx_{n-1}-x_0x_n}{x_N-x_0}$ and $q_n\in
C^{2p}(\mathbb{R)}$. For a given set of real numbers $y_n$,
$n=0,1,\ldots,N$, if the IFS satisfies the join up conditions
\begin{equation*}
\left.\begin{array}{l}
w_n(x_0,y_0)=(x_{n-1},y_{n-1})\\
w_n(x_N,y_N)=(x_n,y_n)
\end{array}\right\}
\end{equation*}

for $n=1,2,\ldots,N$, then
the attractor of the IFS is the graph of a $C^{2p}$-fractal
function $\ell_\alpha:[x_0,x_N] \rightarrow \mathbb{R}$ such that
$\ell_\alpha(x_n)=y_n$.

\begin{defn}
The \ $C^{2p}$-fractal  \ function \ $\ell_\alpha$, \ generated \ by \ the \ IFS
$\{\mathbb{R}^2; w_n(x,y)= (a_nx+b_n, \alpha_ny+q_n(x)),
n=1,2,\ldots,N\}$, is called \textit{Lidstone Fractal
Interpolation Function} (\textit{Lidstone FIF}) for a data
$\{(x_n,y_{n,2k});n=0,1,\ldots,N \text{ and } k=0,1,\ldots,p\}$, if each $q_n$ is a Lidstone Interpolating Polynomial of degree at most $2p+1$ in $[x_0,x_N]$ and $\ell_\alpha^{(2k)}(x_n)=y_{n,2k}$. \newline \\ The tuple $\alpha=(\alpha_1,\alpha_2,\ldots,\alpha_N)\in\mathbb{R}^N $ is called scaling factor of the Lidstone FIF $\ell_\alpha$ \end{defn}

The existence and the method of construction  of Lidstone FIF is
given by the following theorem:

\begin{thm}\label{thmc4}
For  given data $\{(x_n,y_{n,2k});n=0,1,\ldots,N \text{ and }
k=0,1,\ldots,p\}$, with abscissas as partition points of $[x_0, x_N]$ and $\alpha \in \Theta_{2p}$, the Lidstone FIF $\ell_\alpha$ uniquely exists. \end{thm}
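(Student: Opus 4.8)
The plan is to convert the self-referential (Read--Bajraktarevi\'c) functional equation satisfied by the attractor into explicit constraints on the maps $F_n$, to solve these constraints for the unknown polynomials $q_n$, and then to recover $\ell_\alpha$ as the unique fixed point of the associated operator. Writing the attractor as the graph of $\ell_\alpha$, the relation $w_n(x,\ell_\alpha(x))=(L_n(x),\ell_\alpha(L_n(x)))$ together with \eqref{eqnc17} gives
\begin{equation*}
\ell_\alpha(L_n(x)) = \alpha_n\,\ell_\alpha(x) + q_n(x), \qquad x\in[x_0,x_N],\ \ n=1,\ldots,N.
\end{equation*}
Since $L_n$ has constant slope $a_n$, I would differentiate this $2l$ times to obtain $a_n^{2l}\ell_\alpha^{(2l)}(L_n(x))=\alpha_n\ell_\alpha^{(2l)}(x)+q_n^{(2l)}(x)$, then evaluate at $x_0$ and $x_N$ using $L_n(x_0)=x_{n-1}$, $L_n(x_N)=x_n$ and the required interpolation identities $\ell_\alpha^{(2l)}(x_n)=y_{n,2l}$. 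This forces
\begin{equation*}
q_n^{(2l)}(x_0) = a_n^{2l}\,y_{n-1,2l} - \alpha_n\,y_{0,2l}, \qquad q_n^{(2l)}(x_N) = a_n^{2l}\,y_{n,2l} - \alpha_n\,y_{N,2l}, \qquad l=0,1,\ldots,p.
\end{equation*}
These are exactly the $2p+2$ even-order endpoint derivatives that determine a Lidstone interpolating polynomial of degree at most $2p+1$, so by Theorem \ref{thmc1} there is one and only one admissible $q_n$, obtained in closed form by substituting the above values into the representation of that theorem. This step simultaneously produces the maps and establishes the uniqueness asserted in the theorem.

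Next I would establish the existence of a continuous $\ell_\alpha$. The case $l=0$ of the relations above reads $q_n(x_0)=y_{n-1,0}-\alpha_n y_{0,0}$ and $q_n(x_N)=y_{n,0}-\alpha_n y_{N,0}$, which are precisely the join-up conditions $w_n(x_0,y_0)=(x_{n-1},y_{n-1})$, $w_n(x_N,y_N)=(x_n,y_n)$. Because $a_n=(x_n-x_{n-1})/(x_N-x_0)\in(0,1)$ and $\alpha\in\Theta_{2p}$ gives $|\alpha_n|<a_n^{2p}<1$, the operator $(Tf)(x)=\alpha_n f(L_n^{-1}(x))+q_n(L_n^{-1}(x))$ for $x\in[x_{n-1},x_n]$ is a well-defined contraction (with factor $\max_n|\alpha_n|<1$) on the complete metric space of continuous functions $f$ on $[x_0,x_N]$ satisfying $f(x_0)=y_0$ and $f(x_N)=y_N$ under the sup metric. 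Its unique fixed point is the sought continuous $\ell_\alpha$, whose graph is the attractor of the IFS.

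The substantive part is to upgrade this continuous fixed point to a $C^{2p}$ function whose even derivatives interpolate the remaining data. My plan is to introduce, for each order $j=1,\ldots,2p$, the auxiliary IFS with vertical maps $y\mapsto(\alpha_n/a_n^{j})\,y+q_n^{(j)}(x)/a_n^{j}$. The crucial arithmetic is contractivity: since $a_n<1$ and $|\alpha_n|<a_n^{2p}\le a_n^{j}$ for every $j\le 2p$, each such vertical map has scaling of modulus strictly less than $1$, so each auxiliary IFS has a unique continuous attractor $g_j$. I would then prove by induction on $j$ that $g_{j-1}$ is continuously differentiable with $g_{j-1}'=g_j$, so that $g_0=\ell_\alpha\in C^{2p}[x_0,x_N]$, by verifying the Barnsley--Harrington matching conditions: the endpoint ordinates forced on $g_j$ at $x_0,x_N$ by its own fixed-point relation are consistent with differentiating $g_{j-1}$, and the pieces of $g_j$ agree at every interior node. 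For the even orders $j=2l$ the interior matching is immediate and returns the value $y_{n,2l}$ at each node $x_n$, yielding the interpolation identities $\ell_\alpha^{(2l)}(x_n)=y_{n,2l}$ directly from the chosen $q_n^{(2l)}(x_0),q_n^{(2l)}(x_N)$.

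I expect the main obstacle to be precisely this smoothness step, and within it the odd orders. Because the $q_n$ were pinned down using only even-order endpoint data, one must check that the induced odd-order derivatives $q_n^{(2l+1)}$ nonetheless yield attractors $g_{2l+1}$ that glue continuously across the interior nodes and genuinely equal $(g_{2l})'$; this is where the $C^\infty$-smoothness of the polynomials $q_n$ and the uniform contractivity $|\alpha_n|<a_n^{2p}$ across all orders up to $2p$ must be used together, through a uniform-convergence argument for the difference quotients of each $g_{j-1}$. Once $g_0=\ell_\alpha\in C^{2p}$ is established and the even-order interpolation identities are read off at the partition points, existence is complete, and uniqueness follows from the uniqueness of the $q_n$ together with the uniqueness of the fixed point.
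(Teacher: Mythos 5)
Your argument is, in substance, the paper's own proof: the endpoint constraints you derive by differentiating the self-referential equation are precisely the join-up conditions the paper imposes on the derived maps $F_{n,2k}(x,y)=\frac{\alpha_n y+q_n^{(2k)}(x)}{a_n^{2k}}$; the unique admissible $q_n$ is then read off from Theorem \ref{thmc1}, exactly as in (\ref{eqnc21}); and $\ell_\alpha$ is obtained as the fixed point of the Read--Bajraktarevi\'c operator (\ref{eqnc24}). The only divergence is at the smoothness step, which the paper dispatches in one sentence by ``repeated applications'' of Theorem 1 of Barnsley and Harrington \cite{b9}, and which you, rightly, single out as the substantive difficulty --- but leave as a plan rather than a proof.

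That difficulty is a genuine gap, and your proposed repair cannot work as stated. Barnsley--Harrington's Theorem 1 requires the matching conditions $F_{n-1,j}(x_N,y_{N,j})=F_{n,j}(x_0,y_{0,j})$, $n=2,\ldots,N$, at \emph{every} order $j=1,\ldots,2p$, the odd orders included, where $y_{0,j}=q_1^{(j)}(x_0)/(a_1^{j}-\alpha_1)$ and $y_{N,j}=q_N^{(j)}(x_N)/(a_N^{j}-\alpha_N)$ are forced by the fixed-point relations. Your $q_n$ has degree at most $2p+1$, i.e.\ $2p+2$ coefficients, and is already pinned down by the $2p+2$ even-order endpoint conditions; no freedom remains with which to satisfy the odd-order conditions, and they fail in general. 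A concrete instance: take $\alpha=0$ (which lies in $\Theta_{2p}$), $p=1$, $[x_0,x_2]=[0,1]$, $x_1=\tfrac12$, and data generated by $g(x)=x^4$. Then $\ell_0$ is the classical piecewise Lidstone interpolant (Remark \ref{remc2}), whose two cubic pieces work out to $x^3-\tfrac{x}{8}$ and $3x^3-3x^2+\tfrac{9x}{8}-\tfrac18$, with one-sided slopes $\tfrac58$ and $\tfrac38$ at $x_1$; thus $\ell_0\notin C^1[0,1]$, let alone $C^2$. Since the order-one mismatch depends continuously on $\alpha$, it remains nonzero for all sufficiently small nonzero scaling factors, so no uniform-convergence argument for difference quotients can establish $g_{2l}'=g_{2l+1}$: when the order-$(2l+1)$ join-up fails, that attractor is not even the graph of a function. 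What the construction actually delivers is a continuous FIF whose even-order derived FIFs interpolate the data only in a one-sided, piecewise sense --- exactly the status of the classical $L_{p+1}^\Delta$-interpolate, which lies merely in $C[x_0,x_N]$ --- and genuine $C^{2p}$ smoothness would require modifying the scheme, e.g.\ raising $\deg q_n$ to free the odd-order endpoint derivatives and solving the resulting linear system across the knots, as is done for spline and Hermite FIF constructions. To your credit, your write-up exposes, rather than hides, precisely the point at which the paper's own proof is incomplete.
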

\begin{proof}
Consider IFS $\{\mathbb{R}^2; w_n(x,y)=(L_n(x), F_n(x,y)),
n=1,2,\ldots,N\}$, where $L_n:\mathbb{R} \rightarrow  \mathbb{R}$
, $F_n:\mathbb{R}^2 \rightarrow \mathbb{R}$ are given by (\ref{eqnc17}) and the polynomials $q_n$ of degree at most $2p+1$ are chosen as follows: \newline

\noindent Let the maps $F_{n,k}(x,y):\mathbb{R}^2
\rightarrow \mathbb{R}$, $k=0,1,\ldots,p$, be defined by
\begin{equation*}\label{eqnc18}\left.
\begin{array}{lll}
F_{n,0}(x,y)&=& F_n(x,y)\\
F_{n,2k}(x,y)&=&\frac{\alpha_ny+q_n^{(2k)}(x)}{a_n^{2k}},
k=1,2,\ldots,p,
\end{array}\right\}.
\end{equation*}
The  polynomials $q_n(x)$ are now chosen to satisfy the join up conditions
\\ \newline $F_{n,2k}(x_{0},y_{0,2k})$ $=
y_{n-1,2k}$ and $F_{n,2k}(x_{N},y_{N,2k})$ $=y_{n,2k}$ for
$k=0,1,\ldots,p$ \\ \newline so that
\begin{equation*}\label{eqnc19}
\frac{\alpha_ny_{0,2k}+q_n^{(2k)}(x_0)}{a_n^{2k}}=y_{n-1,2k}~~~~\text{and}~~~~
\frac{\alpha_ny_{N,2k}+q_n^{(2k)}(x_N)}{a_n^{2k}}=y_{n,2k}
\end{equation*}
for $k=1,2,\ldots, p$. Consequently, the chosen polynomials satisfy
\begin{equation*}\label{eqnc20}\left.
\begin{array}{lll}
q_n^{(2k)}(x_0)=a_n^{2k}y_{n-1,2k}-\alpha_ny_{0,2k}\\
q_n^{(2k)}(x_N)=a_n^{2k}y_{n,2k}-\alpha_ny_{N,2k}
\end{array}\right\},
\end{equation*}
for $k=1,2,\ldots, p$.

\noindent Equivalently, the polynomials $q_n$ are Lidstone Interpolating Polynomials of degree at most $2p+1$ in $[x_0,x_N]$ given by
(\textit{c.f. Theorem \ref{thmc1}})
\begin{equation}\label{eqnc21}
q_n(x)=\sum_{l=0}^p\left[q_n^{(2l)}(x_0)
\Lambda_l\left(\frac{x_N-x}{x_N-x_0}\right)+q_n^{(2l)}(x_N)
\Lambda_l\left(\frac{x-x_0}{x_N-x_0}\right)\right](x_N-x_0)^{2l}.
\end{equation} for the two point data $(x_0 ,\ a_n^{2k}y_{n-1,2k}-\alpha_ny_{0,2k})$, $(x_N ,a_n^{2k}y_{n,2k}-\alpha_ny_{N,2k})$. \newline

Now, IFS
$\{\mathbb{R}^2;w_n(x,y)=(L_n(x),F_n(x,y)):n=1,2,\ldots,N\}$,
determines a unique FIF $\ell_\alpha\in C^{2p}[x_0,x_N]$, which is
the fixed point of Read-Bajraktarevi\'{c} Operator $T_\alpha:\mathcal{F} \rightarrow \mathcal{F}$ given by
\begin{equation}\label{eqnc24}
T_\alpha\phi(x)=F_n(L_n^{-1}(x), \phi\circ L_n^{-1}(x));~~~x\in
[x_{n-1},x_n]
\end{equation}
where,
\begin{equation*}
\mathcal{F}=\left\{\phi:\mathbb{R}\rightarrow \mathbb{R}~~|~~\phi
\text{ is continuous and }
\phi(x_0)=y_{0,0},\phi(x_N)=y_{N,0}\right\}.
\end{equation*}
Thus, for $x\in [x_{n-1},x_n]$,
$\ell_\alpha(x)=\alpha_n \ \ell_\alpha\circ L_n^{-1}(x)+q_n\circ
L_n^{-1}(x)$.\newline

For establishing that $\ell_\alpha$ is the Lidstone FIF for the
given data, it remains to be shown that
$\ell_\alpha^{(2k)}(x_n)=y_{n,2k}$, for $n=0,1,\ldots,N$ and
$k=1,2,\ldots,p$. Since, by construction, for $n=1,2,\ldots,N$,
$F_{n,2k}(x_0,y_{0,2k})=y_{n-1,2k}$ and
$F_{n,2k}(x_N,y_{N,2k})=y_{n,2k}$, it follows that, the IFS
$\{\mathbb{R}^2$;$(L_n(x),F_{n,2k}(x,y))$,\ $n =1,2,\ldots,N\}$
determines an FIF $\ell_\alpha^{(2k)}$. In fact, repeated
applications of a result of Barnsley and Harrington (\cite{b9}, Theorem 1), show that $\ell_\alpha^{(2k)}$ is the $2k^{th}$
derivative of FIF $\ell_\alpha$. Further, since
$\ell_\alpha^{(2k)}$ is the fixed point of Read-Bajraktarevi\'{c}
operator $T_\alpha^{(2k)}:\mathcal{F}^{(2k)} \rightarrow
\mathcal{F}^{(2k)}$ given by
\begin{equation}\label{eqnc30}
T_\alpha^{(2k)}\phi(x)=F_{n,2k}(L_n^{-1}(x), \phi\circ
L_n^{-1}(x));~~~x\in [x_{n-1},x_n]
\end{equation}
where,
\begin{equation*}
\mathcal{F}^{(2k)}=\left\{\phi:\mathbb{R}\rightarrow
\mathbb{R}~~|~~\phi \text{ is continuous and }
\phi(x_0)=y_{0,2k},\phi(x_N)=y_{N,2k}\right\},
\end{equation*}
it follows that $\ell_\alpha^{(2k)}(x)=F_{n,2k}(L_n^{-1}(x),
\ell_\alpha^{(2k)}\circ L_n^{-1}(x))$. Since
$\ell_\alpha^{(2k)}\in\mathcal{F}^{(2k)}$,
$\ell_\alpha^{(2k)}(x_0)=y_{0,2k}$ and
$\ell_\alpha^{(2k)}(x_N)=y_{N,2k}$.
Further, for $n=1,2,...,N-1$,
\begin{eqnarray*}
\ell_\alpha^{(2k)}(x_n)&=& F_{n,2k}(L_n^{-1}(x_n),\ell_\alpha^{(2k)}\circ L_n^{-1}(x_n))\\
&=&F_{n,2k}(x_N,\ell_\alpha^{(2k)}(x_N))\\
&=&F_{n,2k}(x_N,y_{N,2k})\\
&=&y_{n,2k}.
\end{eqnarray*}
Thus, $\ell_\alpha$ is the Lidstone FIF for the given data.
\end{proof}

\begin{rem}\label{remc2}
It follows from the proof of Theorem \ref{thmc4} that, for $x\in [x_{n-1},x_n],$ Lidstone FIF $\ell_\alpha$ is given by
\begin{equation}\label{eqnc25}
\ell_\alpha(x)=\alpha_n \ \ell_\alpha\circ L_n^{-1}(x)+q_n\circ
L_n^{-1}(x).
\end{equation}
Consequently,  $\ell_\alpha\in C^{2p}[x_0,x_N]$ and is a polynomial of degree at most $2p+1$ the interval $[x_{n-1},x_n]$, if $\alpha_n=0$. Thus, if
$\alpha=(\alpha_1,\alpha_2,\ldots,\alpha_N)=0$, the Lidstone FIF $\ell_\alpha\in L_{p+1}^\Delta$ and, for $x\in [x_{n-1},x_n]$, is given by
\begin{equation*}\label{eqnc25a}
\ell_\alpha(x)=\sum_{l=0}^p\left[y_{n-1,2l}\Lambda_{l}\left(\frac{x_n-x}{x_n-x_{n-1}}\right)+y_{n,2l}\Lambda_{l}
\left(\frac{x-x_{n-1}}{x_n-x_{n-1}}\right)\right](x_n-x_{n-1})^{2l}.
\end{equation*}
\end{rem}

\section{Convergence of Lidstone FIF and its Derivatives}\label{convLFIF}
The convergence of Lidstone FIF and its even order derivatives to the data generating function and its corresponding derivatives are studied in this section. To this end, the continuous dependence of Lidstone FIF $\ell_\alpha$ and its even order derivatives on the parameter  $\alpha$ is proved first.This is followed by proving that, for the classical Lidstone Interpolation Function $\phi =$ $\phi^{(0)}$ and its even order derivatives $\phi^{(2k)}$  and Lidstone FIF $\ell_\alpha =$ $\ell_\alpha^{(0)}$ and its corresponding derivatives $\ell_\alpha^{(2k)}$, the $L^\infty$-norm $ \|\phi^{(2k)}-\ell_\alpha^{(2k)}\|_\infty \rightarrow 0$, $k=0,1,2,\ldots,p$, as norm of the  partition of the interval $[x_0,x_N]$ consisting abscissas of data points tends to zero.  Using these results, it is found in this section that $L^\infty$-error in approximation of data generating function in $C^{2p}[x_0,x_N]$  by Lidstone FIF is of the order $N^{-2p}$, while $L^\infty$-error in approximation of $2k$-order derivative of data generating function in $C^{2p}[x_0,x_N]$  by corresponding order derivative of Lidstone FIF is of the order $N^{-(2p-2k)}$.

\ \ \ \

Let data $\{(x_n,y_{n,2k});n=0,1,\ldots,N \text{ and }
k=0,1,\ldots,p\}$ be generated by a function $g(x)\in
C^{2p}[x_0,x_N]$ (i.e., $g^{(2k)}(x_n)=y_{n,2k}$) \ and \ $\alpha \in \Theta_{2p}$. In the sequel, we use the following notations:
$|\alpha|_\infty=\max_n\{|\alpha_n|\}$,
$\rho=\max_k\{|y_{0,2k}|,|y_{N,2k}|\}$ and
$\|\ell_\alpha\|_\infty=\sup\{|\ell_\alpha(x)|~|~x\in[x_0,x_N]\}$ and
denote the polynomial $q_n(x)$ by $q_n(\alpha_n,x)$, to emphasize that it depends on the parameter $\alpha_n$, as observed in the proof of Theorem \ref{thmc4}.

We first prove  continuous dependence of Lidstone FIF $\ell_\alpha$ and its even order derivatives on the parameter  $\alpha$ with the help of following proposition:
\begin{prop}\label{propc4}
Let $q_n(\alpha_n,x)$, $n=1,2,\ldots,N$, be the polynomials
constructed in Theorem \ref{thmc4}. Then, for $k=0,1,2,\ldots,p$ and $x\in [x_0,x_N]$,
\begin{equation}\label{eqnc33}
\left|\frac{\partial^{2k+1}}{\partial\alpha_n
\partial x^{2k}}q_n(\alpha_n,x)\right|\leq \frac{2\rho\pi}{3}\sum_{l=0}^{p-k}\left(\frac{x_N-x_0}{\pi}\right)^{2l}
\end{equation}\end{prop}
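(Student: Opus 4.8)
The plan is to start from the explicit representation (\ref{eqnc21}) of $q_n(\alpha_n,x)$ as a Lidstone Interpolating Polynomial and to make its dependence on $\alpha_n$ transparent by substituting the coefficient formulas $q_n^{(2l)}(x_0)=a_n^{2l}y_{n-1,2l}-\alpha_n y_{0,2l}$ and $q_n^{(2l)}(x_N)=a_n^{2l}y_{n,2l}-\alpha_n y_{N,2l}$ obtained in the proof of Theorem \ref{thmc4}. Since these coefficients are affine in $\alpha_n$, differentiating once with respect to $\alpha_n$ annihilates the $\alpha_n$-free parts $a_n^{2l}y_{\cdot,2l}$ and leaves
\[
\frac{\partial}{\partial \alpha_n} q_n(\alpha_n,x) = -\sum_{l=0}^p \left[ y_{0,2l}\,\Lambda_l\!\left(\frac{x_N-x}{x_N-x_0}\right) + y_{N,2l}\,\Lambda_l\!\left(\frac{x-x_0}{x_N-x_0}\right) \right](x_N-x_0)^{2l},
\]
which no longer depends on $\alpha_n$, so that the remaining $2k$ derivatives in $x$ act only on the Lidstone polynomials.

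Next I would compute $\frac{\partial^{2k}}{\partial x^{2k}}$ of each term using the defining recursion (\ref{eqnc1}), namely $\Lambda_l''=\Lambda_{l-1}$. Writing $u=(x-x_0)/(x_N-x_0)$ and $v=(x_N-x)/(x_N-x_0)$, the chain rule gives $\frac{d^2}{dx^2}\Lambda_l(u)=(x_N-x_0)^{-2}\Lambda_{l-1}(u)$, and likewise for $v$ since the sign of $dv/dx$ squares away. Iterating $k$ times yields $\frac{d^{2k}}{dx^{2k}}\Lambda_l=(x_N-x_0)^{-2k}\Lambda_{l-k}$ whenever $l\geq k$, while every term with $l<k$ vanishes because $\Lambda_0(t)=t$ has $\Lambda_0''=0$. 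After re-indexing by $m=l-k$ this produces
\[
\frac{\partial^{2k+1}}{\partial \alpha_n \partial x^{2k}} q_n(\alpha_n,x) = -\sum_{m=0}^{p-k}\left[ y_{0,2(m+k)}\,\Lambda_m(v) + y_{N,2(m+k)}\,\Lambda_m(u)\right](x_N-x_0)^{2m}.
\]

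Finally I would take absolute values and bound term by term. Since $x\in[x_0,x_N]$ forces $u,v\in[0,1]$, Proposition \ref{propc2} gives $|\Lambda_m(t)|\leq \frac{1}{3\pi^{2m-1}}=\frac{\pi}{3}\,\pi^{-2m}$ for $m\geq 1$, and the case $m=0$ is covered as well since $|\Lambda_0(t)|=|t|\leq 1\leq \pi/3$. Combining this with $|y_{0,\cdot}|,|y_{N,\cdot}|\leq \rho$ collapses each bracket to $\tfrac{2\rho\pi}{3}\pi^{-2m}$ and gives the claimed estimate $\frac{2\rho\pi}{3}\sum_{m=0}^{p-k}\left(\frac{x_N-x_0}{\pi}\right)^{2m}$. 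I expect no genuine obstacle; the only step demanding care is the second one — correctly tracking the powers of $(x_N-x_0)$ through the iterated recursion and confirming that the $l<k$ terms drop out — together with the minor observation that the $m=0$ term is consistent with the Proposition's bound, which is why it is worth recording $\pi/3>1$ explicitly.
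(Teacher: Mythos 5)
Your proposal is correct and follows essentially the same route as the paper's proof: differentiate the representation \eqref{eqnc21} once in $\alpha_n$ (killing the $\alpha_n$-free parts of the affine coefficients), compute the $2k$-th $x$-derivative via the recursion $\Lambda_l''=\Lambda_{l-1}$ so the sum reduces to $p-k$ terms with factors $(x_N-x_0)^{2l-2k}$, and then bound term by term using \eqref{eqnc6} and $|y_{0,2l}|,|y_{N,2l}|\leq\rho$. Your treatment is in fact slightly more careful than the paper's, since you verify explicitly that the terms with $l<k$ vanish and that the bound $|\Lambda_0(t)|\leq 1\leq\pi/3$ keeps the $m=0$ term consistent with Proposition \ref{propc2}, a point the paper passes over silently.
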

\begin{proof}
The representation of $q_n(\alpha_n,x)$ as given by Equation (\ref{eqnc21}) is
\begin{equation}\label{eqnc34}
q_n(\alpha_n,x)=\sum_{l=0}^p\left[q_n^{(2l)}(x_0)
\Lambda_l\left(\frac{x_N-x}{x_N-x_0}\right)+q_n^{(2l)}(x_N)
\Lambda_l\left(\frac{x-x_0}{x_N-x_0}\right)\right](x_N-x_0)^{2l}
\end{equation}
where, $q_n^{(2l)}(x_0)=a_n^{2l}y_{n-1,2l}-\alpha_ny_{0,2l}$ and
$q_n^{(2l)}(x_N)=a_n^{2l}y_{n,2l}-\alpha_ny_{N,2l}$.
Differentiating both sides of (\ref{eqnc34}) with respect to
$\alpha_n$
\begin{equation}\label{eqnc35}
\frac{\partial}{\partial\alpha_n}q_n(\alpha_n,x)=
(-1)\sum_{l=0}^p\left[y_{0,2l}
\Lambda_l\left(\frac{x_N-x}{x_N-x_0}\right)+y_{N,2l}
\Lambda_l\left(\frac{x-x_0}{x_N-x_0}\right)\right](x_N-x_0)^{2l}.
\end{equation}
By Equation (\ref{eqnc35}) and Inequality (\ref{eqnc6}),
\begin{equation}\label{eqnc36}
\left|\frac{\partial}{\partial\alpha_n}q_n(\alpha_n,x)\right|\leq
\frac{2\rho\pi}{3}\sum_{l=0}^p\left(\frac{x_N-x_0}{\pi}\right)^{2l}
\end{equation}
for $x\in [x_0,x_N]$. Further, the properties of Lidstone
Polynomials (\textit{c.f. Section \ref{lidstone}}) and
(\ref{eqnc34}) give that, for $1\leq k\leq p$,
\begin{equation*}\label{eqnc37}
\frac{\partial^{2k}}{\partial
x^{2k}}q_n(\alpha_n,x)=\sum_{l=0}^{p-k}\left[q_n^{(2l+2k)}(x_0)
\Lambda_l\left(\frac{x_N-x}{x_N-x_0}\right)+q_n^{(2l+2k)}(x_N)
\Lambda_l\left(\frac{x-x_0}{x_N-x_0}\right)\right](x_N-x_0)^{2l}.
\end{equation*}
Thus, following the method of derivation of Inequality
(\ref{eqnc36}),
\begin{equation}\label{eqnc38}
\left|\frac{\partial^{2k+1}}{\partial\alpha_n
\partial x^{2k}}q_n(\alpha_n,x)\right| \leq
\frac{2\rho\pi}{3}\sum_{l=0}^{p-k}\left(\frac{x_N-x_0}{\pi}\right)^{2l}
\end{equation}
for all $x\in [x_0,x_N]$ and $1\leq k \leq p$. Inequality
(\ref{eqnc33}) follows from (\ref{eqnc36}) and (\ref{eqnc38}).
\end{proof}

The continuous dependence of Lidstone FIF $\ell_{\alpha}$ on the scaling factor $\alpha$ is deduced by the following theorem:
\begin{thm}\label{propc6}
Let $\ell_{\alpha'}$ and $\ell_{\alpha''}$ be the Lidstone $\textit FIFs$
with respect to scaling factors \ $\alpha', \alpha'' \in \Theta_{2p}$ respectively, for the data $\{(x_n,y_{n,2k});n=0,1,\ldots,N \text{and} \ k=0,1,\ldots,p\}$.
Then,
\begin{equation}\label{eqnc43}
\|\ell_{\alpha'}-\ell_{\alpha''}\|_\infty \leq
\frac{|\alpha'-\alpha''|_\infty}{1-|\alpha'|_\infty}
(\|\ell_{\alpha''}\|_\infty +M_{0,2p})
\end{equation}
where,
$M_{0,2p}=\frac{2\rho\pi}{3}\sum_{l=0}^p\left(\frac{x_N-x_0}{\pi}\right)^{2l}$.
\end{thm}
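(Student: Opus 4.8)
The plan is to exploit the Read--Bajraktarevi\'{c} fixed-point representation (\ref{eqnc25}) of each Lidstone FIF together with the derivative bound of Proposition \ref{propc4}. Fix $x\in[x_{n-1},x_n]$. Both FIFs satisfy their functional equations
\begin{equation*}
\ell_{\alpha'}(x)=\alpha'_n\,\ell_{\alpha'}\circ L_n^{-1}(x)+q_n(\alpha'_n,L_n^{-1}(x)),\qquad
\ell_{\alpha''}(x)=\alpha''_n\,\ell_{\alpha''}\circ L_n^{-1}(x)+q_n(\alpha''_n,L_n^{-1}(x)).
\end{equation*}
I would subtract these two identities and, in the linear term, add and subtract $\alpha''_n\,\ell_{\alpha'}\circ L_n^{-1}(x)$ so as to split
\begin{equation*}
\alpha'_n\,\ell_{\alpha'}\circ L_n^{-1}(x)-\alpha''_n\,\ell_{\alpha''}\circ L_n^{-1}(x)
=\alpha'_n\,(\ell_{\alpha'}-\ell_{\alpha''})\circ L_n^{-1}(x)+(\alpha'_n-\alpha''_n)\,\ell_{\alpha''}\circ L_n^{-1}(x).
\end{equation*}
This decomposition isolates a contraction-type term carrying the factor $\alpha'_n$ from a term measuring the change in the scaling factor.

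For the polynomial contribution $q_n(\alpha'_n,\cdot)-q_n(\alpha''_n,\cdot)$, I would apply the Mean Value Theorem in the single variable $\alpha_n$, writing this difference as $(\alpha'_n-\alpha''_n)\,\frac{\partial}{\partial\alpha_n}q_n(\xi_n,L_n^{-1}(x))$ for some $\xi_n$ between $\alpha'_n$ and $\alpha''_n$. The $k=0$ case of Inequality (\ref{eqnc33}) in Proposition \ref{propc4} then bounds this partial derivative by exactly $M_{0,2p}$, uniformly in $\xi_n$ and $x$. Taking absolute values, using $|\alpha'_n|\le|\alpha'|_\infty$ and $|\alpha'_n-\alpha''_n|\le|\alpha'-\alpha''|_\infty$ for each $n$, and passing to the supremum over $x\in[x_0,x_N]$ (noting that $L_n^{-1}$ maps $[x_{n-1},x_n]$ onto $[x_0,x_N]$), yields the self-referential estimate
\begin{equation*}
\|\ell_{\alpha'}-\ell_{\alpha''}\|_\infty\le|\alpha'|_\infty\,\|\ell_{\alpha'}-\ell_{\alpha''}\|_\infty+|\alpha'-\alpha''|_\infty\bigl(\|\ell_{\alpha''}\|_\infty+M_{0,2p}\bigr).
\end{equation*}

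The final step is to solve this inequality for $\|\ell_{\alpha'}-\ell_{\alpha''}\|_\infty$, which is permissible precisely because $|\alpha'|_\infty<1$. This strict bound is guaranteed by the definition of $\Theta_{2p}$: since the $a_n=(x_n-x_{n-1})/(x_N-x_0)$ are positive and sum to one, each $a_n\le1$, hence $a_n^{2p}\le1$, and the strict requirement $|\alpha'_n|<a_n^{2p}$ forces $|\alpha'|_\infty<1$. Dividing by the positive quantity $1-|\alpha'|_\infty$ then gives exactly (\ref{eqnc43}). The main obstacle here is not any single estimate but the correct handling of the self-reference: $\ell_{\alpha'}$ reappears on the right through its own fixed-point equation, and the add-and-subtract decomposition is what lets one collect all the $\|\ell_{\alpha'}-\ell_{\alpha''}\|_\infty$ contributions on the left, so that the contractivity $|\alpha'|_\infty<1$ inherited from $\Theta_{2p}$ can close the argument. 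I would expect the analogous continuous-dependence bounds for the even-order derivatives $\ell_\alpha^{(2k)}$ to follow by the identical scheme, now using the operator $T_\alpha^{(2k)}$ of (\ref{eqnc30}) together with the general $k$ case of (\ref{eqnc33}).
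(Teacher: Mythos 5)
Your proposal is correct and follows essentially the same route as the paper: the fixed-point equation (\ref{eqnc25}), the add-and-subtract splitting of the scaled terms, the Mean Value Theorem in $\alpha_n$ with the $k=0$ case of (\ref{eqnc33}) bounding the partial derivative by $M_{0,2p}$, and solving the resulting self-referential inequality using $|\alpha'|_\infty<1$ (which you, unlike the paper, explicitly justify from the definition of $\Theta_{2p}$). One trivial slip: your displayed decomposition corresponds to adding and subtracting $\alpha'_n\,\ell_{\alpha''}\circ L_n^{-1}(x)$, not $\alpha''_n\,\ell_{\alpha'}\circ L_n^{-1}(x)$ as you state, but the identity you actually use is correct.
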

\begin{proof}
For $x\in [x_{n-1},x_n]$, it follows by (\ref{eqnc25}) and the mean value theorem that
\begin{eqnarray*}
|\ell_{\alpha'}(x)-\ell_{\alpha''}(x)|
&\leq&|\alpha_n'|~|\ell_{\alpha'}(L_n^{-1}(x))-\ell_{\alpha''}(L_n^{-1}(x))|\\
& &+|\alpha_n'-\alpha_n''|~|\ell_{\alpha''}(L_n^{-1}(x))|\\
&\leq&|\alpha_n'|~|\ell_{\alpha'}(L_n^{-1}(x))-\ell_{\alpha''}(L_n^{-1}(x))|\\
& &+|\alpha_n'-\alpha_n''|~|\ell_{\alpha''}(L_n^{-1}(x))|\\
&
&+|\alpha_n'-\alpha_n''|\left|\frac{\partial}{\partial\alpha_n}q_n(\xi_n,L_n^{-1}(x))\right|
\end{eqnarray*}
for some $\xi_n$ lying between $\alpha_n'$ and $\alpha_n''$. By
Proposition \ref{propc4}, the above inequality implies
\begin{equation*}
\|\ell_{\alpha'}-\ell_{\alpha''}\|_\infty \leq |\alpha'|_\infty
\|\ell_{\alpha'}-\ell_{\alpha''}\|_\infty
+|\alpha'-\alpha''|_\infty (\|\ell_{\alpha''}\|_\infty +M_{0,2p})
\end{equation*}
which gives (\ref{eqnc43}).
\end{proof}

An estimate on  $L^\infty$-error between the classical Lidstone
Interpolation Function and Lidstone FIF for a given data is now
found in the following corollary of Theorem 4.1:

\begin{cor}\label{thmc5a}
Let $\{(x_n,y_{n,2k});n=0,1,\ldots,N \text{ and }
k=0,1,\ldots,p\}$ be a given data and $\alpha \in
\Theta_{2p}$. Let $\ell_\alpha$ be the Lidstone FIF and $\phi$ be
the classical Lidstone Interpolation Function for the above data.
Then
\begin{equation}\label{eqnc46}
\|\ell_\alpha-\phi\|_\infty \leq \frac{|\alpha|_\infty}
{1-|\alpha|_\infty}(\|\phi\|_\infty+M_{0,2p})
\end{equation}
where,
$M_{0,2p}=\frac{2\rho\pi}{3}\sum_{l=0}^p\left(\frac{x_N-x_0}{\pi}\right)^{2l}$.
\end{cor}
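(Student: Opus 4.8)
The plan is to realize the classical Lidstone Interpolation Function $\phi$ as a Lidstone FIF with a specific, trivial scaling factor, and then simply invoke Theorem \ref{propc6} (the continuous-dependence estimate). By Remark \ref{remc2}, when the scaling factor is the zero tuple $\alpha=(0,0,\ldots,0)$, the Lidstone FIF $\ell_\alpha$ coincides with the $L_{p+1}^\Delta$-interpolate of the data, which is precisely the classical Lidstone Interpolation Function $\phi$. Thus I would set $\alpha''=(0,0,\ldots,0)\in\Theta_{2p}$ so that $\ell_{\alpha''}=\phi$, and keep $\alpha'=\alpha$ the given scaling factor.

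With this identification the hypotheses of Theorem \ref{propc6} are met for the same data $\{(x_n,y_{n,2k})\}$, so inequality (\ref{eqnc43}) applies directly. Substituting $\alpha'=\alpha$ and $\alpha''=0$ gives $|\alpha'-\alpha''|_\infty = |\alpha|_\infty$ and $|\alpha'|_\infty=|\alpha|_\infty$, while $\|\ell_{\alpha''}\|_\infty = \|\phi\|_\infty$. The left-hand side becomes $\|\ell_\alpha-\phi\|_\infty$, and the right-hand side becomes
\[
\frac{|\alpha|_\infty}{1-|\alpha|_\infty}\bigl(\|\phi\|_\infty + M_{0,2p}\bigr),
\]
which is exactly the claimed bound (\ref{eqnc46}), with $M_{0,2p}$ unchanged since it does not depend on the scaling factor. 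The constant $M_{0,2p}=\frac{2\rho\pi}{3}\sum_{l=0}^p\left(\frac{x_N-x_0}{\pi}\right)^{2l}$ carries over verbatim.

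The only point requiring a moment of care is confirming that the zero tuple indeed lies in $\Theta_{2p}=\{\alpha : |\alpha_n|<a_n^{2p}\}$; this is immediate since $a_n>0$ forces $a_n^{2p}>0$, so $|0|<a_n^{2p}$ holds for every $n$. I expect no genuine obstacle here: the corollary is a direct specialization of Theorem \ref{propc6}, and the substantive analytic work (the mean value theorem argument and the use of Proposition \ref{propc4} to bound $\partial q_n/\partial\alpha_n$) has already been discharged in proving that theorem. The corollary's role is to record the useful special case in which one of the two FIFs degenerates to the classical interpolant, which is the form needed later for the convergence analysis comparing $\ell_\alpha$ against the known error estimate (\ref{eqnc16}) for $\phi$.
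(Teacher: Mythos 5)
Your proposal is correct and matches the paper's own proof exactly: the paper also obtains Inequality (\ref{eqnc46}) by specializing Inequality (\ref{eqnc43}) of Theorem \ref{propc6} to $\alpha'=\alpha$, $\alpha''=0$ and using $\ell_0=\phi$. Your additional checks (that $0\in\Theta_{2p}$ and that $M_{0,2p}$ is independent of the scaling factor, via Remark \ref{remc2}) are sound and merely make explicit what the paper leaves implicit.
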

\begin{proof}
Inequality (\ref{eqnc46}) is an immediate consequence of
Inequality (\ref{eqnc43}) with  $\alpha'=\alpha$ and $\alpha''=0$
, since $\ell_0=\phi$.
\end{proof}
\begin{rem}\label{remc3}
Since \ $\alpha \in \Theta_{2p}$ \ in Theorem 4.1 implies that $|\alpha_n|< a_n^{2p}$, the inequality $|\alpha|_\infty \leq(x_N-x_0)^{-2p} \|\Delta\|^{2p}$ holds, where $\|\Delta\|=max_k |x_k-x_{k-1}|$ is the norm of the partition $x_0<x_1<\cdots<x_N$ of the interval $[x_0,x_N]$. Therefore, it follows from Corollary \ref{thmc5a} that the convergence of Lidstone FIF to
Classical Lidstone Interpolation Function is of the order $\|\Delta\|^{2p}$.
\end{rem}
Using Corollary \ref{thmc5a}, the order of convergence of Lidstone
FIF to the data generating function is found in the following
theorem:
\begin{thm}\label{thmc5}
Let $x_0<x_1<\cdots<x_N$  be a uniform partition of the interval
$[x_0,x_N]$ and $\{(x_n,y_{n,2k});n=0,1,\ldots,N \text{ and }
k=0,1,\ldots,p\}$ be a data generated by the function $g\in
C^{2p}[x_0,x_N]$. Let
$\ell_\alpha$ be the Lidstone FIF for the data. Then,
\begin{equation*}\label{eqnc50}
\|g-\ell_\alpha\|_\infty =O(N^{-2p}).
\end{equation*}
\end{thm}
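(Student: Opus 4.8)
The plan is to insert the classical Lidstone Interpolation Function $\phi = L^\Delta_{p+1}g$ as an intermediary and split the error by the triangle inequality,
\[
\|g-\ell_\alpha\|_\infty \le \|g-\phi\|_\infty + \|\phi-\ell_\alpha\|_\infty,
\]
thereby reducing the claim to showing that each of the two terms on the right is $O(N^{-2p})$.

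First I would control $\|g-\phi\|_\infty$, the error of the classical interpolant. This is precisely the content of Theorem \ref{thmc3} with $k=0$, which gives $\|g-\phi\|_\infty \le 2d_{2p,0}\|g^{(2p)}\|_\infty\|\Delta\|^{2p}$. Since $g\in C^{2p}[x_0,x_N]$ the quantity $\|g^{(2p)}\|_\infty$ is a finite constant depending only on $g$, and $d_{2p,0}$ is an absolute constant, so this term is $O(\|\Delta\|^{2p})$. (I would note in passing that $g\in C^{2p}$ suffices to invoke Theorem \ref{thmc3} for the $k=0$ estimate, since only $\|g^{(2p)}\|_\infty$ enters there.)

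Second I would control $\|\phi-\ell_\alpha\|_\infty$, the gap between the classical interpolant and the fractal one. Corollary \ref{thmc5a} already yields
\[
\|\ell_\alpha-\phi\|_\infty \le \frac{|\alpha|_\infty}{1-|\alpha|_\infty}\bigl(\|\phi\|_\infty+M_{0,2p}\bigr),
\]
and Remark \ref{remc3} supplies $|\alpha|_\infty \le (x_N-x_0)^{-2p}\|\Delta\|^{2p}$. It then remains to check that the remaining factors stay bounded as the partition is refined: the constant $M_{0,2p}$ depends only on $\rho$, $p$ and $x_N-x_0$, all fixed by the boundary data of $g$; the denominator satisfies $1-|\alpha|_\infty\to 1$ since $|\alpha|_\infty\to 0$; and $\|\phi\|_\infty$ is bounded because $\phi\to g$ uniformly by the first step, whence $\|\phi\|_\infty \le \|g\|_\infty+1$ for all sufficiently fine partitions. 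Consequently this term is also $O(|\alpha|_\infty)=O(\|\Delta\|^{2p})$.

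Finally, for a uniform partition one has $\|\Delta\|=(x_N-x_0)/N$, so $\|\Delta\|^{2p}=(x_N-x_0)^{2p}N^{-2p}$; combining the two estimates gives $\|g-\ell_\alpha\|_\infty=O(N^{-2p})$. The main point requiring care, rather than the triangle-inequality splitting itself, is the justification of the uniform-in-$N$ boundedness of the factors appearing in the Corollary \ref{thmc5a} bound, in particular that $\|\phi\|_\infty$ does not blow up and that $1-|\alpha|_\infty$ stays bounded away from zero as $N\to\infty$; both follow cleanly from the first step, so the argument closes.
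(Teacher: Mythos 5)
Your proposal is correct and follows essentially the same route as the paper's own proof: decompose through the classical interpolant $\phi=L^\Delta_{p+1}g$, control $\|g-\phi\|_\infty$ by Theorem \ref{thmc3} with $k=0$ and $\|\phi-\ell_\alpha\|_\infty$ by Corollary \ref{thmc5a} together with $|\alpha|_\infty\le(x_N-x_0)^{-2p}\|\Delta\|^{2p}$, then use $\|\Delta\|=(x_N-x_0)/N$ for the uniform partition. The only cosmetic difference is that the paper bounds $\|\phi\|_\infty$ explicitly by $2d_{2p,0}\|\Delta\|^{2p}\|g^{(2p)}\|_\infty+\|g\|_\infty$ (again via Theorem \ref{thmc3}) instead of your ``$\|g\|_\infty+1$ for sufficiently fine partitions,'' which is immaterial.
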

\begin{proof}
Let  $L^\Delta_{p_{+1}}g$ be the classical Lidstone Interpolation
Function for data $\{(x_n,y_{n,2k});n=0,1,\ldots,N \text{ and }
k=0,1,\ldots,p\}$. Inequality (\ref{eqnc16}) gives
\begin{equation*}\label{eqnc48}
\|L^\Delta_{p_{+1}}g\|_\infty \leq 2d_{2p,0} \ |\Delta\|^{2p}
\|g^{(2p)}\|_\infty+\|g\|_\infty
\end{equation*}
where, $d_{2p,0}=\frac{(-1)^pE_{2p}}{2^{2p}(2p)!}$, $E_{2p}$
being $2p^{th}$-Euler number. Therefore, it follows by Corollary \ref{thmc5a} that
\begin{equation}\label{eqnc49}
\|\ell_\alpha-L^\Delta_{p_{+1}}g\|_\infty \leq
\frac{|\alpha|_\infty}
{1-|\alpha|_\infty}(2d_{2p,0}\|\Delta\|^{2p}
\|g^{(2p)}\|_\infty+\|g\|_\infty+M_{0,2p})
\end{equation}
where,
$M_{0,2p}=\frac{2\rho\pi}{3}\sum_{l=0}^p\left(\frac{x_N-x_0}{\pi}\right)^{2l}$.
Using Inequalities (\ref{eqnc16}) and (\ref{eqnc49}), the above
inequality becomes
\begin{equation*}\label{eqnc52}
\|g-\ell_\alpha\|_\infty \leq \frac{1}
{1-|\alpha|_\infty}[2d_{2p,0}\|\Delta\|^{2p}
\|g^{(2p)}\|_\infty+|\alpha|_\infty(\|g\|_\infty+M_{0,2p})].
\end{equation*}
Since
$|\alpha|_\infty<\frac{1}{N^{2p}}=\frac{\|\Delta\|^{2p}}{(x_N-x_0)^{2p}}$ implies
$\frac{1}{1-|\alpha|_\infty} \leq \frac{N^{2p}}{N^{2p}-1}$, it follows from the above inequality that
\begin{equation*}\label{eqnc53}
\|g-\ell_\alpha\|_\infty \leq \frac{1}
{N^{2p}-1}[2d_{2p,0}(x_N-x_0)^{2p}
\|g^{(2p)}\|_\infty+\|g\|_\infty+M_{0,2p}].
\end{equation*}
so that
\begin{equation}\label{eqnc50b}
\|g-\ell_\alpha\|_\infty \leq \vartheta_0N^{-2p}.
\end{equation}
where
\begin{equation*}\label{eqnc50a}
\vartheta_0=\frac{1}{1-N^{-2p}}[2d_{2p,0}(x_N-x_0)^{2p}
\|g^{(2p)}\|_\infty+\|g\|_\infty+M_{0,2p}]
\end{equation*}
Since $\vartheta_0$ is bounded for $N>1$, the result follows from
Inequality (\ref{eqnc50b}).
\end{proof}

Our next result on continuous dependence of even order derivatives $\ell_{\alpha}^{(2k)}$ of Lidstone FIF  $\ell_{\alpha}$ on the scaling factor $\alpha$ is used to  find estimates on $L^\infty$-error between even order derivatives of the classical Lidstone Interpolation Function and corresponding derivatives of Lidstone FIF for a given data. These estimates are then employed to determine the order of $L^\infty$-error in approximation of even order derivatives of data generating function by corresponding derivatives of Lidstone FIF. \newpage

\begin{thm}\label{propc8}
Let $\ell_{\alpha'}$ and $\ell_{\alpha''}$ be the Lidstone FIFs
with scaling factors  $\alpha', \alpha'' \in \Theta_{2p}$ respectively, for a data $\{(x_n,y_{n,2k});n=0,1,\ldots,N \text{ and } k=0,1,\ldots,p\}$. Then, for $k=1,2,\ldots,p$,
\begin{equation}\label{eqnc57}
\|\ell_{\alpha'}^{(2k)}-\ell_{\alpha''}^{(2k)}\|_\infty \leq
\frac{|\alpha'-\alpha''|_\infty}{\mu^{2k}-|\alpha'|_\infty}
(\|\ell^{(2k)}_{\alpha''}\|_\infty +M_{2k,2p})
\end{equation}
where, $\mu=\min\{a_n:1\leq n \leq N\}$ and
$M_{2k,2p}=\frac{2\rho\pi}{3}\sum_{l=0}^{p-k}\left(\frac{x_N-x_0}{\pi}\right)^{2l}$.
\end{thm}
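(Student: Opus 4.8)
The plan is to mirror the argument of Theorem \ref{propc6}, but applied to the self-referential functional equation for the $2k$-th derivative rather than for $\ell_\alpha$ itself. Recall from the proof of Theorem \ref{thmc4} that, for $x\in[x_{n-1},x_n]$, the even-order derivative satisfies $\ell_\alpha^{(2k)}(x)=F_{n,2k}(L_n^{-1}(x),\ell_\alpha^{(2k)}\circ L_n^{-1}(x))$, that is,
\[
\ell_\alpha^{(2k)}(x)=\frac{\alpha_n\,\ell_\alpha^{(2k)}(L_n^{-1}(x))+q_n^{(2k)}(\alpha_n,L_n^{-1}(x))}{a_n^{2k}},\qquad x\in[x_{n-1},x_n].
\]
First I would write this identity for both $\alpha'$ and $\alpha''$ and subtract, grouping the numerator into three pieces: the self-referential term $\alpha_n'\big(\ell_{\alpha'}^{(2k)}-\ell_{\alpha''}^{(2k)}\big)\circ L_n^{-1}$, the term $(\alpha_n'-\alpha_n'')\,\ell_{\alpha''}^{(2k)}\circ L_n^{-1}$, and the polynomial difference $q_n^{(2k)}(\alpha_n',\cdot)-q_n^{(2k)}(\alpha_n'',\cdot)$.

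Next I would treat the polynomial difference by the mean value theorem in the variable $\alpha_n$, exactly as in the proof of Theorem \ref{propc6}: there is $\xi_n$ between $\alpha_n'$ and $\alpha_n''$ with $q_n^{(2k)}(\alpha_n',\cdot)-q_n^{(2k)}(\alpha_n'',\cdot)=(\alpha_n'-\alpha_n'')\,\frac{\partial^{2k+1}}{\partial\alpha_n\partial x^{2k}}q_n(\xi_n,\cdot)$. The essential input is Proposition \ref{propc4}, whose bound (\ref{eqnc33}) gives precisely $\big|\frac{\partial^{2k+1}}{\partial\alpha_n\partial x^{2k}}q_n(\xi_n,\cdot)\big|\le M_{2k,2p}$. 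Taking absolute values, applying the triangle inequality, and then taking the supremum over $x\in[x_0,x_N]$ (noting that $L_n^{-1}$ maps each $[x_{n-1},x_n]$ onto $[x_0,x_N]$), I would use $|\alpha_n'|\le|\alpha'|_\infty$, $|\alpha_n'-\alpha_n''|\le|\alpha'-\alpha''|_\infty$, and $a_n^{-2k}\le\mu^{-2k}$ since $a_n\ge\mu$. This yields
\[
\|\ell_{\alpha'}^{(2k)}-\ell_{\alpha''}^{(2k)}\|_\infty\le\frac{1}{\mu^{2k}}\Big[|\alpha'|_\infty\,\|\ell_{\alpha'}^{(2k)}-\ell_{\alpha''}^{(2k)}\|_\infty+|\alpha'-\alpha''|_\infty\big(\|\ell_{\alpha''}^{(2k)}\|_\infty+M_{2k,2p}\big)\Big].
\]
Collecting the common term $\|\ell_{\alpha'}^{(2k)}-\ell_{\alpha''}^{(2k)}\|_\infty$ on the left and dividing by $\mu^{2k}-|\alpha'|_\infty$ then produces the claimed inequality (\ref{eqnc57}).

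The one point that needs care, and the main difference from Theorem \ref{propc6}, is the factor $a_n^{-2k}$ introduced by the definition of $F_{n,2k}$. In Theorem \ref{propc6} the self-map had contractivity constant $|\alpha'|_\infty<1$, whereas here the effective constant after the uniform replacement $a_n^{-2k}\le\mu^{-2k}$ is $|\alpha'|_\infty/\mu^{2k}$, so I must verify that $\mu^{2k}-|\alpha'|_\infty>0$ before dividing. This follows from $\alpha'\in\Theta_{2p}$: for the (uniform) partitions relevant to the convergence results each $a_n$ equals $\mu$, so $|\alpha_n'|<a_n^{2p}=\mu^{2p}\le\mu^{2k}$ and hence $|\alpha'|_\infty<\mu^{2k}$. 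I expect the only genuinely delicate bookkeeping to be this passage from the interval-wise factor $a_n^{-2k}$ to the uniform $\mu^{-2k}$ in the self-referential term; everything else is a direct transcription of the $k=0$ argument, with the extra $2k$ spatial derivatives absorbed wholesale by Proposition \ref{propc4}.
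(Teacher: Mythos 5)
Your proposal reproduces the paper's own proof essentially step for step: the paper likewise starts from the fixed-point equation $\ell_\alpha^{(2k)}(x)=\bigl(\alpha_n\,\ell_\alpha^{(2k)}\circ L_n^{-1}(x)+q_n^{(2k)}\circ L_n^{-1}(x)\bigr)/a_n^{2k}$, splits the difference into the same three terms, applies the mean value theorem in $\alpha_n$ together with Proposition \ref{propc4} to bound the polynomial term by $M_{2k,2p}$, replaces $a_n^{-2k}$ by $\mu^{-2k}$, and collects to divide by $\mu^{2k}-|\alpha'|_\infty$. Your explicit check that $\mu^{2k}-|\alpha'|_\infty>0$ (via $|\alpha_n'|<a_n^{2p}\le\mu^{2k}$ in the uniform case) is a point the paper leaves implicit, and is a welcome addition rather than a divergence in method.
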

\begin{proof}
Since the $2k^{th}$ derivative $\ell_\alpha^{(2k)}$ of the
Lidstone FIF $\ell_\alpha$, is the fixed point of
Read-Bajraktarevi\'{c} Operator $T_\alpha^{(2k)}$ defined by
(\ref{eqnc30}), for $x\in [x_{n-1},x_n]$,
\begin{equation*}\label{eqnc26}
\ell_\alpha^{(2k)}(x)=\frac{\alpha_n\ell_\alpha^{(2k)}\circ
L_n^{-1}(x)+q_n^{(2k)}\circ L_n^{-1}x}{a_n^{2k}}.
\end{equation*}
Thus, by mean value theorem, for each $x\in [x_{n-1},x_n]$,
\begin{eqnarray*}
|\ell_{\alpha'}^{(2k)}(x)-\ell_{\alpha''}^{(2k)}(x)|
&\leq&\frac{1}{a_n^{2k}}|\alpha_n'\ell_{\alpha'}^{(2k)}(L_n^{-1}(x))-\alpha_n''\ell_{\alpha''}^{(2k)}(L_n^{-1}(x))|\\
&
&~~+\frac{1}{a_n^{2k}}|q_n^{(2k)}(\alpha_n',L_n^{-1}(x))-q_n^{(2k)}(\alpha_n'',L_n^{-1}(x))|.\\
&\leq &~\frac{|\alpha_n'|}{a_n^{2k}}~|\ell_{\alpha'}^{(2k)}(L_n^{-1}(x)) -\ell_{\alpha''}^{(2k)}(L_n^{-1}(x))|\\
& &~~+\frac{|\alpha_n'-\alpha_n''|}{a_n^{2k}}~|\ell_{\alpha''}^{(2k)}(L_n^{-1}(x))|\\
&
&~~+\frac{|\alpha_n'-\alpha_n''|}{a_n^{2k}}\left|\frac{\partial}{\partial\alpha_n}q_n^{(2k)}(\xi_n,L_n^{-1}(x))\right|
\end{eqnarray*}
 for some $\xi_n$ lying between $\alpha_n'$ and
$\alpha_n''$. Using Proposition \ref{propc4}, the above inequality implies
\begin{equation*}
\|\ell^{(2k)}_{\alpha'}-\ell^{(2k)}_{\alpha''}\| \leq
\frac{|\alpha'|_\infty}{\mu^{2k}}
\|\ell^{(2k)}_{\alpha'}-\ell^{(2k)}_{\alpha''}\|
+\frac{|\alpha'-\alpha''|_\infty}{\mu^{2k}}
(\|\ell^{(2k)}_{\alpha''}\|_\infty +M_{2k,2p})
\end{equation*}
which gives (\ref{eqnc57}).
\end{proof}

The estimates on $L^\infty$-error between even order derivatives of the classical Lidstone Interpolation Function and corresponding derivatives of Lidstone FIF for a given data are now given by the following corollary of Theorem 4.2: \newpage
\begin{cor}\label{thmc6a}
For a data $\{(x_n,y_{n,2k});n=0,1,\ldots,N \text{ and }
k=0,1,\ldots,p\}$ and scaling factor $\alpha \in \Theta_{2p}$, let $\ell_\alpha$ be the Lidstone FIF determined by Theorem \ref{thmc4} and $\phi$
be the classical Lidstone Interpolation Function. Then, for $k=1,2,\ldots,p$,
\begin{equation}\label{eqnc46a}
\|\ell_\alpha^{(2k)}-\phi^{(2k)}\|_\infty \leq
\frac{|\alpha|_\infty}
{\mu^{2k}-|\alpha|_\infty}(\|\phi^{(2k)}\|_\infty+M_{2k,2p})
\end{equation}
where, $\mu=\min\{a_n:1\leq n \leq N\}$ and
$M_{2k,2p}=\frac{2\rho\pi}{3}\sum_{l=0}^{p-k}\left(\frac{x_N-x_0}{\pi}\right)^{2l}$.
\end{cor}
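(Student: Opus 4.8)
My plan is to obtain this corollary as a direct specialization of Theorem \ref{propc8}, exactly in the spirit of how Corollary \ref{thmc5a} was deduced from Theorem \ref{propc6}. The crucial observation is that the classical Lidstone Interpolation Function $\phi$ coincides with the Lidstone FIF corresponding to the zero scaling factor. Indeed, Remark \ref{remc2} establishes that when $\alpha=(\alpha_1,\ldots,\alpha_N)=0$ the fractal function $\ell_\alpha$ lies in $L_{p+1}^\Delta$ and reduces on each subinterval to the piecewise Lidstone interpolating polynomial, so that $\ell_0=\phi$.

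First I would record the corresponding identity for the even order derivatives, namely $\ell_0^{(2k)}=\phi^{(2k)}$ for $k=1,2,\ldots,p$. This deserves a brief word of justification: by the construction in Theorem \ref{thmc4}, $\ell_\alpha^{(2k)}$ is genuinely the $2k$-th derivative of $\ell_\alpha$ (via the repeated application of the Barnsley--Harrington result), and this identification persists at $\alpha=0$; hence the $2k$-th derivative of the zero-scaling FIF is precisely the $2k$-th derivative of the piecewise Lidstone interpolant $\phi$.

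With this identification in hand, I would apply the continuous-dependence estimate (\ref{eqnc57}) of Theorem \ref{propc8} with the specific choices $\alpha'=\alpha$ and $\alpha''=0$. Substituting, the numerator $|\alpha'-\alpha''|_\infty$ becomes $|\alpha|_\infty$, the denominator $\mu^{2k}-|\alpha'|_\infty$ becomes $\mu^{2k}-|\alpha|_\infty$, and the factor $\|\ell_{\alpha''}^{(2k)}\|_\infty$ becomes $\|\ell_0^{(2k)}\|_\infty=\|\phi^{(2k)}\|_\infty$. The constant $M_{2k,2p}$ is inherited verbatim. This yields exactly Inequality (\ref{eqnc46a}).

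There is essentially no analytic obstacle here, since all the work has been front-loaded into Theorem \ref{propc8}; the only points requiring care are the legitimacy of taking $\alpha''=0$, that is, confirming that $0\in\Theta_{2p}$ (which holds trivially, as each coordinate satisfies $0<a_n^{2p}$), and the derivative identity $\ell_0^{(2k)}=\phi^{(2k)}$ noted above. Once these are in place, the corollary follows immediately.
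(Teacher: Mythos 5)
Your proposal is correct and follows exactly the paper's own route: the authors likewise obtain Inequality (\ref{eqnc46a}) by applying Theorem \ref{propc8} with $\alpha'=\alpha$ and $\alpha''=0$, using the identification $\ell_0=\phi$. Your added remarks verifying $0\in\Theta_{2p}$ and the derivative identity $\ell_0^{(2k)}=\phi^{(2k)}$ are sound and merely make explicit what the paper leaves implicit.
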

\begin{proof}
Inequality (\ref{eqnc46a}) is an immediate consequence of
Inequality (\ref{eqnc57}) with  $\alpha'=\alpha$ and $\alpha''=0$
since $\ell_0=\phi$.
\end{proof}
Using Corollary \ref{thmc6a}, the order of $L^\infty$-error
between even order derivatives of data generating function and the corresponding derivatives  of its Lidstone FIF is found in the following theorem:
\begin{thm}\label{thmc6}
Let $x_0<x_1<\cdots<x_N$ be a uniform partition of $[x_0,x_N]$ and
$\{(x_n,y_{n,2k})$; $n=0,1,\ldots,N$ and $k=0,1,\ldots,p\}$ be a
data generated by a function $g\in C^{2p}[x_0,x_N]$. For $\alpha \in \Theta_{2p}$, let $\ell_\alpha$ be the Lidstone FIF for the above data determined by Theorem \ref{thmc4}. Then, for $k=1,2,\ldots,p$,
\begin{equation}\label{eqnc60}
\|g^{(2k)}-\ell^{(2k)}_\alpha\|_\infty =O(N^{-(2p-2k)}).
\end{equation}
\end{thm}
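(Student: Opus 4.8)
The plan is to mirror the proof of Theorem \ref{thmc5} (the $k=0$ case), but now at the level of the $2k$-th derivatives, interposing the classical Lidstone Interpolation Function $\phi=L^\Delta_{p+1}g$ between the data generating function $g$ and the Lidstone FIF $\ell_\alpha$. By the triangle inequality,
\begin{equation*}
\|g^{(2k)}-\ell_\alpha^{(2k)}\|_\infty \leq \|g^{(2k)}-\phi^{(2k)}\|_\infty + \|\phi^{(2k)}-\ell_\alpha^{(2k)}\|_\infty,
\end{equation*}
so it is enough to control the ``classical'' term and the ``fractal'' term separately.

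First I would bound the classical term. Since $\phi^{(2k)}=\frac{d^{2k}}{dx^{2k}}L^\Delta_{p+1}g$, applying Theorem \ref{thmc3} (Inequality (\ref{eqnc16})) with derivative order $2k$ gives $\|g^{(2k)}-\phi^{(2k)}\|_\infty \leq 2d_{2p,2k}\|g^{(2p)}\|_\infty\|\Delta\|^{2p-2k}$. For a uniform partition $\|\Delta\|=(x_N-x_0)/N$, so this term is already of order $N^{-(2p-2k)}$. The same inequality, via $\|\phi^{(2k)}\|_\infty\leq\|g^{(2k)}\|_\infty+\|g^{(2k)}-\phi^{(2k)}\|_\infty$, shows that $\|\phi^{(2k)}\|_\infty$ is bounded uniformly in $N$ for every $k=1,\ldots,p$.

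Next I would bound the fractal term with Corollary \ref{thmc6a}, which gives $\|\ell_\alpha^{(2k)}-\phi^{(2k)}\|_\infty\leq \frac{|\alpha|_\infty}{\mu^{2k}-|\alpha|_\infty}(\|\phi^{(2k)}\|_\infty+M_{2k,2p})$, where the parenthesised factor is $O(1)$ by the previous step and $M_{2k,2p}$ is independent of $N$. For the uniform partition every $a_n$ equals $1/N$, so $\mu=1/N$ and $\mu^{2k}=N^{-2k}$, while $\alpha\in\Theta_{2p}$ forces $|\alpha|_\infty<N^{-2p}$. Substituting,
\begin{equation*}
\frac{|\alpha|_\infty}{\mu^{2k}-|\alpha|_\infty}<\frac{N^{-2p}}{N^{-2k}-N^{-2p}}=\frac{N^{-(2p-2k)}}{1-N^{-(2p-2k)}},
\end{equation*}
which for $1\leq k\leq p-1$ is $O(N^{-(2p-2k)})$ because its denominator stays bounded away from zero. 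Combining the two terms then yields the stated order.

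The delicate case, which I would treat most carefully, is $k=p$, where the asserted rate degenerates to $O(N^0)=O(1)$. There the classical term is harmless ($d_{2p,2p}$ is a fixed constant and $\|\Delta\|^0=1$), but in the fractal term the gap $\mu^{2p}-|\alpha|_\infty=N^{-2p}-|\alpha|_\infty$ is exactly the distance of $|\alpha|_\infty$ from the boundary of $\Theta_{2p}$; equivalently, $\ell_\alpha^{(2p)}$ is itself an FIF whose effective scaling factors $\alpha_n/a_n^{2p}$ have moduli $<1$ but may tend to $1$. Hence $\frac{|\alpha|_\infty}{\mu^{2p}-|\alpha|_\infty}$, and with it $\|\ell_\alpha^{(2p)}\|_\infty$, remains bounded precisely when $|\alpha|_\infty$ stays bounded away from $\mu^{2p}$. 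I expect this to be the main obstacle: to obtain the claimed boundedness for $k=p$ uniformly in $N$ one must read $O(\cdot)$ with a constant allowed to depend on this margin, or restrict to scaling factors satisfying $|\alpha|_\infty\leq c\,\mu^{2p}$ for a fixed $c<1$, in which case the estimate for all $k=1,\ldots,p$ follows at once.
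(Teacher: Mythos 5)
Your proof follows essentially the same route as the paper: split $\|g^{(2k)}-\ell_\alpha^{(2k)}\|_\infty$ by the triangle inequality through $\phi=L^\Delta_{p+1}g$, bound the classical term by Theorem \ref{thmc3} at derivative order $2k$, bound the fractal term by Corollary \ref{thmc6a}, and use $\mu^{2k}=N^{-2k}$, $|\alpha|_\infty<N^{-2p}$ for the uniform partition; the only formal difference is that the paper parametrizes $|\alpha|_\infty=N^{-(2p+s)}$ with $s>0$ and bounds $\frac{|\alpha|_\infty}{\mu^{2k}-|\alpha|_\infty}<\frac{1}{N^{2p+s-2k}-1}$, where you substitute directly and obtain the equivalent bound $\frac{N^{-(2p-2k)}}{1-N^{-(2p-2k)}}$. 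Your caveat about $k=p$ is well taken and is in fact more careful than the paper's own treatment: the paper's closing assertion that $\vartheta_k$ is ``bounded for $N>1$'' is, for $k=p$, uniform in $\alpha$ only if $s$ stays bounded away from $0$ as $N\to\infty$ (equivalently $|\alpha|_\infty\le c\,\mu^{2p}$ for a fixed $c<1$), which is precisely the margin condition you isolate, whereas for $k\le p-1$ both your bound and the paper's are uniform over all admissible $\alpha\in\Theta_{2p}$ since $2p+s-2k>2$.
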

\begin{proof}
Inequalities (\ref{eqnc16}), (\ref{eqnc46a}) and the triangle
inequality give
\begin{eqnarray}\label{eqnc62}
\|g^{(2k)} - \ell^{(2k)}_\alpha\|_\infty &\leq
&~2d_{2p,2k}\|\Delta\|^{2p-2k} \|g^{(2p)}\|_\infty\nonumber\\
\nonumber & &~+\frac{|\alpha|_\infty} {\mu^{2k}
 -  |\alpha |_\infty}[2d_{2p,2k}\|\Delta\|^{2p-2k}
\|g^{(2p)}\|_\infty + \|g^{(2k)}\|_\infty + M_{2k,2p}]\\
\end{eqnarray}
where $M_{2k,2p}=\frac{2\rho\pi}{3}\sum_{l=0}^{p-k}
\left(\frac{x_N-x_0}{\pi}\right)^{2l}$ and $d_{2p,2k}$ is given by
(\ref{eqnc15}). As $|\alpha|_\infty<\frac{1}{N^{2p}}=
\frac{\|\Delta\|^{2p}}{(x_N-x_0)^{2p}}$, there exits $s>0$ such
that $|\alpha|_\infty=\frac{1}{N^{2p+s}}$ and therefore
$\frac{\alpha|_\infty}{\mu^{2k}-|\alpha|_\infty}<
\frac{1}{N^{2p+s-2k}-1}$. Consequently, Inequality (\ref{eqnc62})
becomes
\begin{eqnarray*}
\|g^{(2k)}-\ell^{(2k)}_\alpha\|_\infty &\leq&~
2d_{2p,2k}\|\Delta\|^{2p-2k}
\|g^{(2p)}\|_\infty\\
& &~+\frac{2d_{2p,2k}\|\Delta\|^{2p-2k}
\|g^{(2p)}\|_\infty+\|g^{(2k)}\|_\infty+M_{2k,2p}}
{N^{2p+s-2k}-1}.
\end{eqnarray*}
Since $\|\Delta\|=\frac{x_N-x_0}{N}$, the above inequality gives
\begin{equation}\label{eqnc64}
\|g^{(2k)}-\ell^{(2k)}_\alpha\|_\infty  \leq
\vartheta_kN^{-(2p-2k)}
\end{equation}
where
\begin{equation*}\label{eqnc64a}
\vartheta_k= \frac{1}
{1-N^{-(2p+s-2k)}}\left[2d_{2p,2k}(x_N-x_0)^{2p-2k}
\|g^{(2p)}\|_\infty+\frac{\|g^{(2k)}\|_\infty+M_{2k,2p}}{N^{2p+s-2k}}\right].
\end{equation*}
The Estimate (\ref{eqnc60}) follows from Inequality (\ref{eqnc64}), since it is easily seen that $\vartheta_k$ is bounded for $N>1$.
\end{proof}

\section[Computational Modeling of  Lidstone FIF and Lidstone CFIF]
{Computational Modeling of  Lidstone FIF and its Derivatives}\label{examplesc}

The results obtained in Section \ref{consLFIF}  are illustrated in this section
by computationally constructing a $C^4$-Lidstone FIF for a data $\{(x_n,y_{n,k}); n=0,1,\ldots,10$, $k=0,1,2\}$ generated by the function {\Large $\frac{\sin x}{x}$} in the
interval $[5,25]$. A generated data along with chosen scaling factors are given in Table \ref{tableex1c1}. The random \ iteration \ algorithm \ with $20000$ iterations is used to simulate the $C^4$-Lidstone FIF and its derivatives using the construction method developed in Theorem \ref{thmc4}. The resulting plots of Lidstone FIF and its second and fourth order derivatives are given in Figure 1.
\newpage
\begin{table}[ht]
\begin{center}
\begin{tabular}{|c|r|r|r|r|r|}
\hline
 $n$  & $x_n$ & $y_{n,0}$ & $y_{n,2}$  & $y_{n,4}$ & $\alpha_n$ \\
\hline
 $0$  & $ 5$ &$-0.1918$ &$ 0.1991$  & $-0.0508$ & -- \\

 $1$  &$ 7$ &$ 0.0939$ &$-0.0593$  &$ 0.1409$ &$0.3162\times 10^{-4}$ \\

 $2$  &$ 9$ &$ 0.0458$ &$-0.0672$  &$-0.0092$ &$0.3981\times 10^{-4}$ \\

 $3$  &$11$ &$-0.0909$ &$ 0.0895$  &$-0.0819$ &$0.1585\times 10^{-4}$ \\

 $4$  &$13$ &$ 0.0323$ &$-0.0212$  &$ 0.0523$ &$0.1995\times 10^{-4}$ \\

 $5$  &$15$ &$ 0.0434$ &$-0.0497$  &$ 0.0272$ &$0.2512\times 10^{-4}$ \\

 $6$  &$17$ &$-0.0566$ &$ 0.0543$  &$-0.0581$ &$0.3981\times 10^{-4}$ \\

 $7$  &$19$ &$ 0.0079$ &$-0.0024$  &$ 0.0188$ &$0.3802\times 10^{-4}$ \\

 $8$  &$21$ &$ 0.0398$ &$-0.0421$  &$ 0.0337$ &$0.2512\times 10^{-4}$ \\

 $9$  &$23$ &$-0.0368$ &$ 0.0346$  &$-0.0400$ &$0.3090\times 10^{-4}$ \\

$10$  &$25$ &$-0.0053$ &$ 0.0084$  &$ 0.0012$ &$0.3162\times 10^{-4}$ \\
\hline
\end{tabular}
\caption{\small {A data generated by {\large$\frac{\sin x}{x}$} and chosen scaling factors $\alpha_n$}\label{tableex1c1}}
\end{center}
\end{table}
\newpage
\begin{figure}[!hbp]
\centering
\subfigure[$c^4$-Lidstone FIF]{\epsfig{file=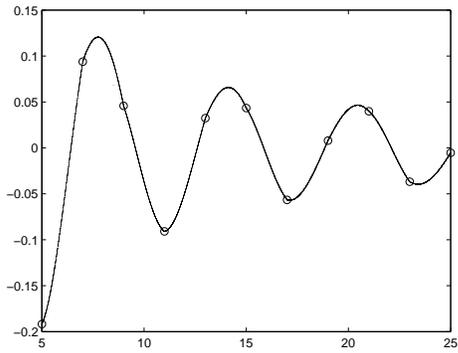, width=7cm}\label{fig0}}\hspace{2cm}
\subfigure[Second derivative of $c^4$-Lidstone FIF]{\epsfig{file=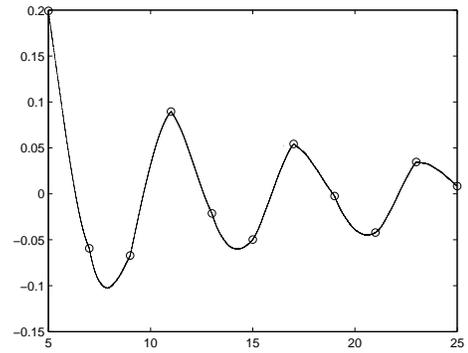, width=7cm}}\\
\subfigure[Fourth derivative of $c^4$-Lidstone FIF
]{\epsfig{file=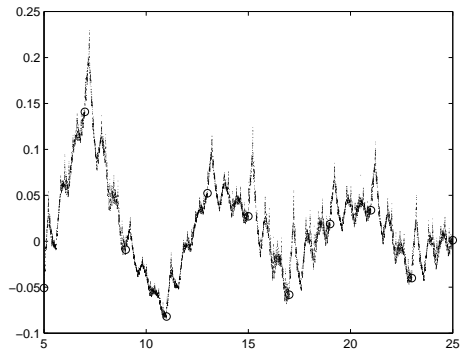, width=7cm}\label{m0}}\hspace{2cm}
\caption{\small{$c^4$-Lidstone FIF for given data and chosen scaling
factors (c.f. Table \ref{tableex1c1})}}
\end{figure}

\newpage

\section{Conclusions}
The \ classical \ Lidstone \ Interpolation is \ extended \ in \ the \ \ present \ paper as Lidstone FIF to simulate a given data $\{(x_n,y_{n,2k}); \ n=0,1,\ldots,N \ \text{and} \ k=0,1,\ldots,p\}$  with $N,p\in\mathbb{N}$. The existence of Lidstone FIF is established in our work and a computational method for its construction is developed. The constructed Lidstone FIF is a $C^{2p}[x_0,x_N]$ fractal function $\ell_\alpha$ satisfying
$\ell_\alpha^{(2k)}(x_n)=y_{n,2k}$, $n=0,1,\ldots,N$,\ $k=0,1,\ldots,p$. Our error estimates establish that the order of $L^\infty$-error in approximation of a data generating function in $C^{2p}[x_0,x_N]$  by Lidstone FIF is of the order $N^{-2p}$, while $L^\infty$-error in approximation of $2k$-order derivative of the data generating function by corresponding order derivative of Lidstone FIF is of the order $N^{-(2p-2k)}$. The results found in the present work are illustrated for computational constructions of a Lidstone FIF and its derivatives with an example of a data generating function.

\end{document}